\documentclass[10pt,twoside]{article}
\usepackage{amsfonts}
\usepackage{fancyhdr}
\usepackage{titlesec}
\usepackage{cite}
\usepackage{ifthen}
\usepackage{amssymb}
\usepackage{pifont}
\usepackage{stmaryrd}
\usepackage{setspace}
\usepackage{indentfirst}
\usepackage{amsmath,amssymb,amscd,bbm,amsthm,mathrsfs,dsfont}
\input amssym.def

\titleformat{\section}{\centering\large\bfseries}{\S\arabic{section}}{1em}{}
\newboolean{first}
\setboolean{first}{true}

\textheight 205mm \textwidth 145mm \setlength{\oddsidemargin}{5.6mm}
\setlength{\evensidemargin}{5.6mm} \pagestyle{myheadings}
 \markboth{\textit{Appl. Math. J. Chinese Univ.}\qquad
\qquad \qquad \rm Vol. 28, No. *}{{\footnotesize \textit{Wang Ding-huai and Zhou Jiang.}}\qquad\quad \footnotesize \rm Commutators of bilinear Hardy-Littlewood maximal function}

\newtheorem{theorem}{Theorem}[section]
\newtheorem{lemma}{Lemma}[section]

\newtheorem{definition}{Definition}[section]

\begin{document}

\setlength\abovedisplayskip{2pt}
\setlength\abovedisplayshortskip{0pt}
\setlength\belowdisplayskip{2pt}
\setlength\belowdisplayshortskip{0pt}

\title{\bf \Large Necessary and sufficient conditions for boundedness of commutators of bilinear Hardy-Littlewood maximal function\author{Wang Ding-huai and Zhou Jiang$^*$}\date{}} \maketitle
 \footnote{Received: 2016-**-**.}
 \footnote{MR Subject Classification: 42B20, 42B25, 42B35.} 
 \footnote{Keywords: $\mathrm{BMO}$ function, Characterization, Commutator, Hardy-Littlewood maximal function.}
 \footnote{Digital Object Identifier(DOI): 10.1007/s11766-013-****-*.}
 \footnote{Supported by the National Natural Science Foundation of China\,(11661075)}
 \footnote{$^*$Corresponding author}
\begin{center}
\begin{minipage}{135mm}

{\bf \small Abstract}.\hskip 2mm {\small Let $\mathcal{M}$ be the bilinear Hardy-Littlewood  maximal function and $\vec{b}=(b,b)$ be a collection of locally integrable functions. In this paper, the authors establish characterizations of the weighted {\rm BMO} space in terms of several different
commutators of bilinear Hardy-Littlewood maximal function, respectively; these commutators include the maximal iterated commutator $\mathcal{M}_{\Pi \vec{b}}$, the maximal linear commutator $\mathcal{M}_{\Sigma\vec{b}}$, the iterated commutator $[\Pi \vec{b},\mathcal{M}]$ and the linear commutator $[\Sigma \vec{b},\mathcal{M}]$.}
\end{minipage}
\end{center}

\thispagestyle{fancyplain} \fancyhead{}
\fancyhead[L]{\textit{Appl. Math. J. Chinese Univ.}\\
2013, 28(*): ***-***} \fancyfoot{} \vskip 10mm

\section{Introduction}

A locally integrable function $f$ is said to belong to \rm{BMO} space if there exists a constant
$C > 0$ such that for any cube $Q\subset \mathbb{R}^n$,
$$\frac{1}{|Q|}\int_{Q}|f(x)-f_{Q}|dx\leq C,$$
where $f_{Q}=\frac{1}{|Q|}\int_{Q}f(x)dx$ and the minimal constant $C$ is defined by $\|f\|_{*}$.

There are a number of classical results that demonstrate ${\rm BMO}$ functions are the right collections to do harmonic analysis on the boundedness of commutators. A well known result of Coifman, Rochberg and Weiss \cite{CRW} states that the commutator
$$[b,T](f)=bT(f)-T(bf)$$
is bounded on some $L^p$, $1<p<\infty$, if and only if $b\in \mathrm{BMO}$, where $T$ be the classical Calder\'{o}n-Zygmund operator. Chanillo \cite{C} proved that if $b\in {\rm BMO}$, the commutator
$$[b,I_{\alpha}]f(x)=b(x)I_{\alpha}f(x)-I_{\alpha}(bf)(x)$$
is bounded from $L^{p}$ to $L^{q}$ with $1<p<n/\alpha$ and $1/q=1/p-\alpha/n$, where $I_{\alpha}$ be a fractional integral operator. Moreover, if $n-\alpha$ is even, the reverse is also valid. A complete characterization of ${\rm BMO}$ via the commutator $[b,I_{\alpha}]$ was shown by Ding \cite{D}. During the past thirty years, the theory was then extended and generalized to several directions. For instance, Bloom \cite{B} investigated the characterization of {\rm BMO} spaces in the weighted setting. In 1991, Garc\'{\i}a-Cuerva, Harboure, Segovia and Torrea \cite{GHS} showed that the maximal commutator
\begin{eqnarray*}
M_{b}(f)(x)=\sup_{Q\ni x}\frac{1}{|Q|}\int_{Q}|b(x)-b(y)||f(y)|dy
\end{eqnarray*}
is bounded on $L^{p}$, $1<p<\infty$, if and only if $b\in {\rm BMO}$. In 2000, Bastero, Milman and Ruiz \cite{BMR} studied the necessary and sufficient conditions for the boundedness of $[b,M]$ on $L^p$ spaces when $1 < p < \infty$. They showed that the commutator of Hardy-Littlewood maximal operator
$$[b, M](f)(x)=b(x)M(f)(x)-M(bf)(x)$$
is bounded on $L^{p}$, $1<p<\infty$, if and only if $b\in {\rm BMO}$ with $b^{-}\in L^{\infty}$, where $b^{-}(x)=-\min\{b(x),0\}$. In 2014, Zhang \cite{Z1} considered the characterization of ${\rm BMO}$ via the commutator of the fractional maximal function on variable exponent Lebesgue spaces.

In the multilinear setting, the boundedness of commutators has been extensively studied already, as in P\'{e}rez and Torres' \cite{PT}, Tang¡¯s \cite{T}, Lerner, Ombrosi, P\'{e}rez, Torres, and Trujillo-Gonz\'{a}lez¡¯s \cite{LOPTT} and Chen and Xue¡¯s \cite{CX}, and P\'{e}rez, Pradolini, Torres, and Trujillo-Gonz\'{a}lez¡¯s \cite{PPTT}. Specially, Chaffee and Torres \cite{CT}, Wang, Pan and Jiang \cite{JWP} and Zhang \cite{Z} contributed the theory of characterization of {\rm BMO} spaces by considering the {\bf linear commutator} of Multilinear operators, respectively. 
In this paper, we will extend Zhang's result to weighted case and we replace the linear commutators by {\bf iterated commutators}.
\vspace{0.3cm}

Our main results as follows.

\begin{theorem}\label{thm1.1}
Let $1< p_{1},p_{2}<\infty, \vec{b}=(b,b), 1/p=1/p_{1}+1/p_{2}$ and $\omega\in A_{1}$. Then the following are equivalent,
\begin{enumerate}
\item [\rm(A1)] $b\in {\rm BMO}(\omega)$;
\item [\rm(A2)] $\mathcal{M}_{\Sigma\vec{b}}$ is bounded from $L^{p_{1}}(\omega)\times L^{p_{2}}(\omega)$ to $L^{p}(\omega^{1-p})$;
\item [\rm(A3)] $\mathcal{M}_{\Pi\vec{b}}$ is bounded from $L^{p_{1}}(\omega)\times L^{p_{2}}(\omega)$ to $L^{p}(\omega^{1-2p})$.
\end{enumerate}
\end{theorem}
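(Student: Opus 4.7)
The plan is to prove the chain $(\mathrm{A1})\Rightarrow(\mathrm{A2})$ and $(\mathrm{A1})\Rightarrow(\mathrm{A3})$ by a linearization-plus-weighted-bound argument, and the converses $(\mathrm{A2})\Rightarrow(\mathrm{A1})$ and $(\mathrm{A3})\Rightarrow(\mathrm{A1})$ by testing the hypothesised boundedness on characteristic functions of cubes. The two forward directions are treated in parallel, the only difference being the number of ${\rm BMO}$ oscillation factors produced.

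\textbf{Sufficiency.} For $(\mathrm{A1})\Rightarrow(\mathrm{A2})$ I would start from the pointwise splitting
$$
|b(x)-b(y_i)|\le |b(x)-b_Q|+|b(y_i)-b_Q|
$$
inside the supremum defining $\mathcal{M}_{\Sigma\vec{b}}$. This produces a main term bounded by $|b(x)-b_Q|\,\mathcal{M}(f_1,f_2)(x)$ together with commuted pieces of the form $\mathcal{M}((b-b_Q)f_i,f_j)(x)$. The first is handled by the weighted bilinear bound $\mathcal{M}\colon L^{p_1}(\omega)\times L^{p_2}(\omega)\to L^p(\omega)$ (valid for $\omega\in A_1$), together with $b\in {\rm BMO}(\omega)$ and H\"older to absorb $|b(x)-b_Q|$ into the $L^p(\omega^{1-p})$ norm. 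The commuted pieces are controlled by a generalised H\"older inequality in Orlicz form (equivalently, the weighted John--Nirenberg property), which replaces the $(b-b_Q)f_i$ factor by an $L\log L$-average of $f_i$ times $\|b\|_{{\rm BMO}(\omega)}$. For $(\mathrm{A1})\Rightarrow(\mathrm{A3})$ the same scheme applies after expanding $|b(x)-b(y_1)|\,|b(x)-b(y_2)|$ into four pieces; the extra oscillation factor is exactly what shifts the target weight from $\omega^{1-p}$ to $\omega^{1-2p}$.

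\textbf{Necessity.} For $(\mathrm{A2})\Rightarrow(\mathrm{A1})$ I would fix a cube $Q$ and test with $f_1=f_2=\chi_Q$. Restricting the supremum in $\mathcal{M}_{\Sigma\vec{b}}$ to the cube $Q$ itself gives, for $x\in Q$,
$$
\mathcal{M}_{\Sigma\vec{b}}(\chi_Q,\chi_Q)(x)\ge \frac{2}{|Q|}\int_Q|b(x)-b(y)|\,dy\ge 2|b(x)-b_Q|.
$$
Combined with $\|\chi_Q\|_{L^{p_i}(\omega)}=\omega(Q)^{1/p_i}$ and the hypothesised boundedness, this yields
$$
\Bigl(\int_Q |b(x)-b_Q|^p\,\omega(x)^{1-p}\,dx\Bigr)^{1/p}\lesssim \omega(Q)^{1/p}.
$$
Applying H\"older's inequality with exponents $p$ and $p/(p-1)$ against the dual weight $\omega$ then gives $\int_Q|b-b_Q|\,dx\lesssim \omega(Q)$, which is the Muckenhoupt--Wheeden definition of $b\in {\rm BMO}(\omega)$. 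The implication $(\mathrm{A3})\Rightarrow(\mathrm{A1})$ follows from the analogous pointwise bound $\mathcal{M}_{\Pi\vec{b}}(\chi_Q,\chi_Q)(x)\ge |b(x)-b_Q|^2$ on $Q$, followed by H\"older's inequality with exponents $2p$ and $2p/(2p-1)$ against $\omega$ to extract the linear oscillation average.

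\textbf{Main obstacle.} The cleanest portion is the necessity argument, which follows the Bastero--Milman--Ruiz testing recipe. The hard part is the sufficiency direction for $\mathcal{M}_{\Pi\vec{b}}$: tracking how the product of two oscillations together with an $A_1$ weight produces precisely the target exponent $\omega^{1-2p}$ requires iterating the John--Nirenberg / Orlicz H\"older argument carefully, so that each ${\rm BMO}$ factor contributes exactly one power of $\omega^{-1}$. One must also check that the ${\rm BMO}(\omega)$ norm produced by the test-function computation is equivalent to the one exploited in the sufficiency direction, which is where the $A_1$ hypothesis on $\omega$ (and hence the reverse H\"older inequality) plays a decisive role.
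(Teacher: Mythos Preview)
Your necessity arguments $(\mathrm{A2})\Rightarrow(\mathrm{A1})$ and $(\mathrm{A3})\Rightarrow(\mathrm{A1})$ are correct and match the paper's proof essentially verbatim: test on $f_1=f_2=\chi_Q$, use the obvious lower bound on the cube $Q$ itself, and apply H\"older with the right dual weight.

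The sufficiency direction, however, has a genuine gap. Your split $|b(x)-b(y_i)|\le |b(x)-b_Q|+|b(y_i)-b_Q|$ is performed \emph{inside the supremum over $Q$}, so the constant $b_Q$ is tied to the running cube of the supremum. Consequently the expression you call the ``main term'', $|b(x)-b_Q|\,\mathcal{M}(f_1,f_2)(x)$, has no meaning: there is no fixed $Q$ for which this holds, and what you actually obtain is $\sup_{Q\ni x}|b(x)-b_Q|\cdot(\text{averages over }Q)$, which cannot be bounded by any pointwise multiple of $\mathcal{M}(\vec f)(x)$. The same objection applies to your ``commuted pieces'' $\mathcal{M}((b-b_Q)f_i,f_j)$: again $b_Q$ is not a constant but moves with the supremum, so the Orlicz H\"older / John--Nirenberg reduction you describe does not apply directly.

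The paper handles the sufficiency by a completely different route. It linearizes $\mathcal{M}$ via a smooth auxiliary maximal operator $\Phi$ with kernel $\varphi_\epsilon(|x-y_1|+|x-y_2|)$, proves pointwise sharp maximal function estimates of the form
\[
M^{\sharp}_{1/2}\bigl(\mathcal{M}_b^{(i)}(\vec f)\bigr)(x)\lesssim \|b\|_{\mathrm{BMO}(\omega)}\,\omega(x)\bigl(M(\mathcal{M}(\vec f))(x)+M_{\omega,s}(f_i)(x)M(f_j)(x)\bigr)
\]
(Lemmas~3.1--3.3), and then applies the Fefferman--Stein inequality on $L^p(\omega^{1-p})$ (valid since $\omega^{1-p}\in A_\infty$). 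The point is that in the sharp-function estimate the cube $Q$ is fixed by the local average, so subtracting $b_Q$ is legitimate; the passage from local to global is then handled by Fefferman--Stein, not by a naive pointwise bound. If you want an alternative that avoids the $\Phi$-machinery, the clean observation is the factorisation $\mathcal{M}_b^{(i)}(\vec f)\le M_b(f_i)\,M(f_j)$ and $\mathcal{M}_{\Pi\vec b}(\vec f)\le M_b(f_1)\,M_b(f_2)$, reducing everything to the known weighted bound $M_b\colon L^{p_i}(\omega)\to L^{p_i}(\omega^{1-p_i})$ for the linear maximal commutator; but that is not what your proposal says.
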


\vspace{0.3cm}

\begin{theorem}\label{thm1.2}
Let $1< p_{1},p_{2}<\infty, \vec{b}=(b,b), 1/p=1/p_{1}+1/p_{2}$ and $\omega\in A_{1}$. Then the following are equivalent,
\begin{enumerate}
\item [\rm(B1)] $b\in {\rm BMO}(\omega)$ and $b^{-}/\omega\in L^{\infty}$;
\item [\rm(B2)] $[\Sigma\vec{b},\mathcal{M}]$ is bounded from $L^{p_{1}}(\omega)\times L^{p_{2}}(\omega)$ to $L^{p}(\omega^{1-p})$;
\item [\rm(B3)] $[\Pi\vec{b},\mathcal{M}]$ is bounded from $L^{p_{1}}(\omega)\times L^{p_{2}}(\omega)$ to $L^{p}(\omega^{1-2p})$.
\end{enumerate}
\end{theorem}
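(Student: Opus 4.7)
The plan is to treat sufficiency (B1)$\Rightarrow$(B2),(B3) by a Bastero--Milman--Ruiz-type pointwise domination that reduces the commutator estimate to Theorem \ref{thm1.1}, and then to recover the necessary conditions (B2),(B3)$\Rightarrow$(B1) by testing the commutator on characteristic functions of cubes and invoking a weighted Lebesgue differentiation argument at the end.

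\textbf{Sufficiency.} I would first establish the pointwise estimates
\begin{align*}
\bigl|[\Sigma\vec b,\mathcal{M}](\vec f)(x)\bigr| &\leq \mathcal{M}_{\Sigma\vec b}(\vec f)(x)+C\,b^{-}(x)\,\mathcal{M}(\vec f)(x),\\
\bigl|[\Pi\vec b,\mathcal{M}](\vec f)(x)\bigr| &\leq \mathcal{M}_{\Pi\vec b}(\vec f)(x)+C\,b^{-}(x)\,\mathcal{M}_{\Sigma\vec b}(\vec f)(x)+C(b^{-}(x))^{2}\mathcal{M}(\vec f)(x),
\end{align*}
by splitting $b=b^{+}-b^{-}$ in each slot and using the sublinearity of $\mathcal{M}$. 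Since (B1) gives $b^{-}(x)\leq C\omega(x)$, Theorem \ref{thm1.1}(A2),(A3) handles the leading terms. For the remaining pieces, $\omega\in A_{1}$ gives $(\omega,\omega)\in A_{(p_{1},p_{2})}$, so the Lerner--Ombrosi--P\'erez--Torres--Trujillo-Gonz\'alez theorem yields $\mathcal{M}\colon L^{p_{1}}(\omega)\times L^{p_{2}}(\omega)\to L^{p}(\omega)$; the identity $\int(\omega^{k}\mathcal{M})^{p}\omega^{1-kp}\,dx=\int\mathcal{M}^{p}\omega\,dx$ absorbs the correct weight power for $k=1,2$.

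\textbf{Necessity.} Fix a cube $Q$ and take $f_{1}=f_{2}=\chi_{Q}$; one checks $\mathcal{M}(\chi_{Q},\chi_{Q})(x)=1$ for $x\in Q$. For the linear commutator, using $[\Sigma\vec b,\mathcal{M}]=[\Sigma(\vec b-\vec{b_{Q}}),\mathcal{M}]$,
\[
[\Sigma\vec b,\mathcal{M}](\chi_{Q},\chi_{Q})(x)=2(b(x)-b_{Q})-2\mathcal{M}((b-b_{Q})\chi_{Q},\chi_{Q})(x)\quad (x\in Q).
\]
On $E_{Q}=\{b\leq b_{Q}\}\cap Q$ both summands have the same sign, so $|b(x)-b_{Q}|\leq\tfrac12\bigl|[\Sigma\vec b,\mathcal{M}](\chi_{Q},\chi_{Q})(x)\bigr|$; combined with the $L^{p}(\omega^{1-p})$-bound, $\|\chi_{Q}\|_{L^{p_{i}}(\omega)}=\omega(Q)^{1/p_{i}}$, and the symmetrization $\int_{E_{Q}}|b-b_{Q}|=\int_{Q\setminus E_{Q}}|b-b_{Q}|$, this delivers $\omega(Q)^{-1}\int_{Q}|b-b_{Q}|\,\omega\,dx\leq C$, hence $b\in\mathrm{BMO}(\omega)$. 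On $\{b\leq 0\}\cap Q$ (without centering) one has $|[\Sigma\vec b,\mathcal{M}](\chi_{Q},\chi_{Q})(x)|\geq 2b^{-}(x)$; shrinking $Q$ to a Lebesgue point $x_{0}$ with $b(x_{0})<0$ and using $\omega(Q)/|Q|\to\omega(x_{0})$ a.e. (which holds since $\omega\in A_{1}\subset L^{1}_{\mathrm{loc}}$) yields $b^{-}(x_{0})\leq C\omega(x_{0})$. For (B3)$\Rightarrow$(B1) the same test functions give
\[
[\Pi\vec b,\mathcal{M}](\chi_{Q},\chi_{Q})(x)=b(x)^{2}-2b(x)\mathcal{M}(b\chi_{Q},\chi_{Q})(x)+\mathcal{M}(b\chi_{Q},b\chi_{Q})(x),
\]
whose right side is $\geq(b^{-}(x))^{2}$ on $\{b\leq 0\}\cap Q$, and the same differentiation argument produces $b^{-}/\omega\in L^{\infty}$; the $\mathrm{BMO}(\omega)$ bound is then obtained by completing the square via $\mathcal{M}(b\chi_{Q},b\chi_{Q})(x)\geq(|b|_{Q})^{2}$ and $\mathcal{M}(b\chi_{Q},\chi_{Q})(x)\geq|b|_{Q}$ to get a lower bound of the form $(b(x)-|b|_{Q})^{2}$ on a suitable subset of $Q$, then taking square roots and integrating against $\omega$.

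\textbf{Main obstacle.} The delicate step is extracting $\mathrm{BMO}(\omega)$ in (B3)$\Rightarrow$(B1): because $[\Pi\vec b,\mathcal{M}]$ is \emph{not} invariant under $b\mapsto b+c$ (the bilinear maximal is nonlinear in each slot), the clean subtract-the-average trick of the linear case is unavailable, and one must center $b$ at the pointwise level while carefully tracking the sign of the cross term $-2b(x)\mathcal{M}(b\chi_{Q},\chi_{Q})(x)$ on the set where $b(x)$ has a definite sign relative to $|b|_{Q}$, in the spirit of the unweighted iterated-commutator analyses of Chaffee--Torres and Chen--Xue. The weighted symmetrization from $E_{Q}$ to $Q$ in the linear case also uses $\omega\in A_{1}$ in a nontrivial way.
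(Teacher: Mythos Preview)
Your sufficiency argument (B1)$\Rightarrow$(B2),(B3) is correct and essentially the same as the paper's: the pointwise domination by $\mathcal{M}_{\Sigma\vec b}$, $\mathcal{M}_{\Pi\vec b}$ plus powers of $b^{-}$ times $\mathcal{M}$, followed by Theorem~\ref{thm1.1} and $b^{-}\leq C\omega$, is exactly what the authors do.

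The necessity direction contains a genuine error. You assert $[\Sigma\vec b,\mathcal{M}]=[\Sigma(\vec b-\vec{b_Q}),\mathcal{M}]$, but this is false: $\mathcal{M}$ is not linear in the entries (it involves $|f_i|$ inside a supremum), so $\mathcal{M}((b-c)f_1,f_2)\neq \mathcal{M}(bf_1,f_2)-c\,\mathcal{M}(f_1,f_2)$ and the commutator is \emph{not} invariant under $b\mapsto b+c$. You yourself flag this obstruction in the iterated case, but it applies equally to the linear commutator. Consequently your displayed formula $[\Sigma\vec b,\mathcal{M}](\chi_Q,\chi_Q)(x)=2(b(x)-b_Q)-2\mathcal{M}((b-b_Q)\chi_Q,\chi_Q)(x)$ is not the object you have bounds for, and the $E_Q$/symmetrization step that follows is built on an identity you do not have.

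The paper circumvents this by never centering $b$. Instead it proves (Lemma~\ref{lem7}) the exact identities, for $x\in Q_0$,
\[
\mathcal{M}(\chi_{Q_0},\chi_{Q_0})(x)=1,\quad \mathcal{M}(b\chi_{Q_0},\chi_{Q_0})(x)=M_{Q_0}(b)(x),\quad \mathcal{M}(b\chi_{Q_0},b\chi_{Q_0})(x)=M_{Q_0}(b)(x)^2,
\]
where $M_{Q_0}(b)(x)=\sup_{Q_0\supset Q\ni x}|Q|^{-1}\int_Q|b|$. These give $[\Sigma\vec b,\mathcal{M}](\chi_{Q_0},\chi_{Q_0})(x)=2\bigl(b(x)-M_{Q_0}(b)(x)\bigr)$ and $[\Pi\vec b,\mathcal{M}](\chi_{Q_0},\chi_{Q_0})(x)=\bigl(b(x)-M_{Q_0}(b)(x)\bigr)^2$ \emph{exactly}, with no sign splitting needed. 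Since $M_{Q_0}(b)(x)\geq |b(x)|$, one immediately gets $b^{-}(x)\leq M_{Q_0}(b)(x)-b(x)$, and the elementary integral inequality $\int_{Q_0}|b-b_{Q_0}|\lesssim\int_{Q_0}|b-M_{Q_0}(b)|$ yields the $\mathrm{BMO}(\omega)$ bound. Your ``completing the square'' sketch for (B3)$\Rightarrow$(B1) is groping toward this, but using the constant $|b|_Q$ in place of the pointwise $M_{Q_0}(b)(x)$ does not make the cross term cooperate; the localized maximal function is the missing key lemma.
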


\section{Some preliminaries and notations}

In 2009, Lerner, Ombrosi, P\'{e}rez, Torres and Trujillo-Gonz\'{a}lez [12] introduced the following multilinear maximal function that adapts to the multilinear Calder\'{o}n-Zygmund theory. In this paper, we only consider the bilinear case. A similar argument also works for the multilinear cases.

\begin{definition}
For a collection of locally integrable functions $\vec{f}=(f_{1},f_{2})$, the bilinear
maximal function $\mathcal{M}$ is defined by
$$\mathcal{M}(\vec{f})(x)=\sup_{Q\ni x}\prod_{i=1}^{2}\frac{1}{|Q|}\int_{Q}|f_{i}(y_{i})|dy_{i}.$$
\end{definition}

We now give the definitions of the maximal commutators and the commutators related to the bilinear maximal function $\mathcal{M}$.

\begin{definition}
For two collections of locally integrable functions $\vec{f}=(f_{1},f_{2})$ and $\vec{b}=(b_{1},b_{2})$, the maximal linear commutator $\mathcal{M}_{\Sigma\vec{b}}$ is defined by
$$\mathcal{M}_{\Sigma\vec{b}}(\vec{f})(x)=\sum_{i=1}^{2}\mathcal{M}^{(i)}_{b_{i}}(\vec{f})(x),$$
where
$$\mathcal{M}^{(i)}_{b_{i}}(\vec{f})(x)=\sup_{Q\ni x}\frac{1}{|Q|^2}\int_{Q}\int_{Q}|b_{i}(x)-b_{i}(y_{i})|\prod_{j=1}^{2}|f_{j}(y_{j})|dy_{1}dy_{2}.$$

The maximal iterated commutator $\mathcal{M}_{\Pi\vec{b}}$ is defined by
$$\mathcal{M}_{\Pi\vec{b}}(\vec{f})(x)=\sup_{Q\ni x}\frac{1}{|Q|^2}\int_{Q}\int_{Q}\prod_{i=1}^{2}|b_{i}(x)-b_{i}(y_{i})||f_{i}(y_{i})|dy_{1}dy_{2}.$$

The linear commutator of $\mathcal{M}$ is defined by
$$[\Sigma\vec{b},\mathcal{M}](\vec{f})(x)=[b_{1},\mathcal{M}]^{(1)}(\vec{f})(x)+[b_{2},\mathcal{M}]^{(2)}(\vec{f})(x),$$
where
$$[b_{1},\mathcal{M}]^{(1)}(\vec{f})(x)=b_{1}(x)\mathcal{M}(\vec{f})(x)-\mathcal{M}(b_{1}f_{1},f_{2})(x)$$
and
$$[b_{2},\mathcal{M}]^{(2)}(\vec{f})(x)=b_{2}(x)\mathcal{M}(\vec{f})(x)-\mathcal{M}(f_{1},b_{2}f_{2})(x).$$

The iterated commutator of $\mathcal{M}$ is defined by
\begin{eqnarray*}
[\Pi\vec{b},\mathcal{M}](\vec{f})(x)&=&b_{1}(x)b_{2}(x)\mathcal{M}(\vec{f})(x)
-b_{1}(x)\mathcal{M}(f_{1},b_{2}f_{2})(x)\\
&&-b_{2}(x)\mathcal{M}(b_{1}f_{1},f_{2})(x)
+\mathcal{M}(b_{1}f_{1},b_{2}f_{2})(x).
\end{eqnarray*}
\end{definition}

We now recall the definition of $A_{p}$ weight introduced by Muckenhoupt \cite{M}.
\begin{definition}
For $1< p<\infty$ and a nonnegative locally integrable function $\omega$ on $\mathbb{R}^n$, $\omega$ is in the
Muckenhoupt $A_{p}$ class if it satisfies the condition
$$\sup_{Q}\bigg(\frac{1}{|Q|}\int_{Q}\omega(x)dx\bigg)\bigg(\frac{1}{|Q|}\int_{Q}\omega(x)^{-\frac{1}{p-1}}dx\bigg)^{p-1}<\infty.$$
And a weight function $\omega$ belongs to the class $A_{1}$ if there exists $C> 0$ such that for every cube Q,
$$\frac{1}{|Q|}\int_{Q}\omega(x)dx\leq C\mathop\mathrm{ess~inf}_{x\in Q}\omega(x).$$
We write $A_{\infty}=\bigcup_{1\leq p<\infty}A_{p}$.
\end{definition}

\begin{definition}
Let $1\leq p<\infty$. Given a a nonnegative locally integrable function $\omega$, the weighted $\mathrm{BMO}$ space $\mathrm{BMO}^{p}(\omega)$ is defined be the set of all functions $f\in L^{1}_{\mathrm{loc}}(\mathbb{R}^{n})$ such that
$$\|f\|_{\mathrm{BMO}^{p}(w)}:=\sup_{Q}\bigg(\frac{1}{w(Q)}\int_{Q}|f(y)-f_{Q}|^{p}\omega(y)^{1-p}dy\bigg)^{1/p}<\infty,$$
where the supremum is taken over all cubes $Q\subset \mathbb{R}^{n}$ and $\omega(Q)=\int_{Q}\omega(x)dx$. We write $\mathrm{BMO}^{1}(\omega)=\mathrm{BMO}(\omega)$ simple.
\end{definition}

{\bf Remark}
For $1\leq p<\infty$ and $\omega\in A_{1}$, Garc\'{i}a-Cuerva \cite{G} proved that $\mathrm{BMO}(\omega)=\mathrm{BMO}^{p}(\omega)$ with equivalence of the corresponding norms.

Standard real analysis tools as the weighted maximal function $M_{\omega}(f)$, the sharp maximal function $M^{\sharp}(f)$ carries over to this context, namely,
$$M_{\omega}(f)(x)=\sup_{Q\ni x}\frac{1}{\omega(Q)}\int_{Q}|f(y)|\omega(y)dy;$$
$$M^{\sharp}(f)(x)=\sup_{Q\ni x}\inf_{c}\frac{1}{|Q|}\int_{Q}|f(y)-c|dy\approx \sup_{Q\ni x}\frac{1}{|Q|}\int_{Q}|f(y)-f_{Q}|dy.$$
A variant of weighted maximal function and sharp maximal operator $M_{\omega,s}(f)(x)=\big(M_{\omega}(f^{s})\big)^{1/s}$ and $M_{\delta}^{\sharp}(f)(x)=\big(M^{\sharp}(f^{\delta})(x)\big)^{1/\delta}$, which will become the main tool in our scheme.

\section{Main lemmas}

To prove Theorem \ref{thm1.1} and Theorem \ref{thm1.2}, we need the following results.

\begin{lemma}\label{lem1}
Let $\omega\in A_{1}$, $\vec{b}=(b,b)$ and $b\in {\rm BMO}(\omega)$. Then
\begin{eqnarray*}
M^{\sharp}_{\frac{1}{3}}\big(\mathcal{M}_{\Pi\vec b}(\vec{f})\big)(x)&\lesssim& \|b\|^2_{{\rm BMO}(\omega)}\omega(x)^2 M(\mathcal{M}(\vec{f})(x))\\
&&+\|b\|^{2}_{{\rm BMO}(\omega)}\omega(x)^{2}\prod_{i=1}^{2}M_{\omega,s}(f_{i})(x)\\
&&+\sum_{i=1}^{2}\|b\|_{{\rm BMO}(\omega)}\omega(x)M_{\frac{1}{2}}(\mathcal{M}^{(i)}_{b}(\vec{f}))(x),
\end{eqnarray*}
for any $1<s<\infty$ and bounded compact supported functions $f_{1},f_{2}$.
\end{lemma}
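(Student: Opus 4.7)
The plan is to estimate $M^{\sharp}_{1/3}\bigl(\mathcal M_{\Pi\vec b}(\vec f)\bigr)(x)$ cube by cube. For $0<\delta<1$ one has the standard reformulation $M^{\sharp}_\delta(g)(x)\approx\sup_{Q\ni x}\inf_{c}\bigl(\tfrac{1}{|Q|}\int_Q|g(z)-c|^\delta\,dz\bigr)^{1/\delta}$ (via $|a^\delta-b^\delta|\le|a-b|^\delta$), so with $\delta=1/3$ it suffices, for each cube $Q\ni x$, to produce a single constant $c_Q$ making $\bigl(\tfrac{1}{|Q|}\int_Q|\mathcal M_{\Pi\vec b}(\vec f)(z)-c_Q|^{1/3}\,dz\bigr)^{3}$ no larger than the right-hand side of the lemma.

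Fix $Q$, set $Q^{*}=3Q$, and write $b(z)-b(y_i)=(b(z)-b_{Q^{*}})+(b_{Q^{*}}-b(y_i))$. Expanding the product of the two factors inside $\mathcal M_{\Pi\vec b}(\vec f)$, and using $|b_{Q^{*}}-b(y_i)|\le|b(z)-b_{Q^{*}}|+|b(z)-b(y_i)|$ to rewrite the mixed factor in terms of the integrand of $\mathcal M^{(i)}_b$, yields the pointwise bound
$$\mathcal M_{\Pi\vec b}(\vec f)(z)\lesssim|b(z)-b_{Q^{*}}|^{2}\mathcal M(\vec f)(z)+|b(z)-b_{Q^{*}}|\sum_{i=1}^{2}\mathcal M^{(i)}_b(\vec f)(z)+\widetilde{\mathcal M}(\vec f)(z),$$
where $\widetilde{\mathcal M}(\vec f)(z):=\sup_{Q'\ni z}|Q'|^{-2}\iint|b_{Q^{*}}-b(y_1)||b_{Q^{*}}-b(y_2)|\prod_{i}|f_i(y_i)|\,dy_1\,dy_2$. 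Using $|a+b+c|^{1/3}\le a^{1/3}+b^{1/3}+c^{1/3}$ followed by $(u+v+w)^{3}\lesssim u^{3}+v^{3}+w^{3}$, the task reduces to bounding the cubed $L^{1/3}$-average on $Q$ of each of the three summands, with $c_Q$ subtracted only from $\widetilde{\mathcal M}(\vec f)$.

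The first two summands are standard. H\"older on $Q$ with exponents $(3/2,3)$ and $(3,3/2)$ respectively decouples the factor $|b(z)-b_{Q^{*}}|$ from $\mathcal M(\vec f)(z)$ (resp.\ from $\mathcal M^{(i)}_b(\vec f)(z)^{1/2}$). Cubing, and using $b\in{\rm BMO}(\omega)$ together with the $A_1$ estimate $\omega(Q^{*})/|Q^{*}|\lesssim\omega(x)$ to obtain $\tfrac{1}{|Q|}\int_Q|b-b_{Q^{*}}|\,dz\lesssim\|b\|_{{\rm BMO}(\omega)}\omega(x)$, reproduces the first and third terms of the RHS. For $\widetilde{\mathcal M}(\vec f)$ I split $f_i=f_i\chi_{Q^{*}}+f_i\chi_{(Q^{*})^{c}}=f_i^{0}+f_i^{\infty}$ and expand into four bilinear cross terms. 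The $(0,0)$-piece is handled on $Q$ by H\"older with dual exponent $s'$, invoking the equivalence ${\rm BMO}(\omega)={\rm BMO}^{s'}(\omega)$ from the Remark (available since $\omega\in A_1$); this yields exactly $\|b\|^{2}_{{\rm BMO}(\omega)}\omega(x)^{2}\prod_{i}M_{\omega,s}(f_i)(x)$, the middle RHS term.

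The main obstacle is the three remaining cross terms of $\widetilde{\mathcal M}(\vec f)$ involving at least one $f_i^{\infty}$: I need them to be essentially constant in $z\in Q$ so that $c_Q$ can absorb them. The point is that any $Q'\ni z$ that contributes must meet $(Q^{*})^{c}$, which forces $\mathrm{side}(Q')\gtrsim\mathrm{side}(Q)$; a standard doubling/comparison argument (enlarging $Q'$ by a universal factor to a cube containing both $z$ and a fixed reference point $x_{0}\in Q$) then shows that the supremum differs only by a controlled constant when $z$ is replaced by $x_{0}$. I therefore take $c_Q$ to be the sum of these three cross terms evaluated at $x_{0}$; the residual oscillation is dominated by the same ${\rm BMO}(\omega)$ and $A_1$ ingredients used above and is absorbed into the first RHS summand, completing the plan.
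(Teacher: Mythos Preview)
Your decomposition of $\mathcal{M}_{\Pi\vec b}(\vec f)(z)$ into $|b(z)-b_{Q^{*}}|^{2}\mathcal{M}(\vec f)(z)$, the $\mathcal{M}^{(i)}_{b}$ cross terms, and $\widetilde{\mathcal{M}}(\vec f)(z)=\mathcal{M}((b_{Q^{*}}-b)f_{1},(b_{Q^{*}}-b)f_{2})(z)$ is fine, and so is the treatment of the $(0,0)$ piece and (with a little more care) of the mixed pieces $(0,\infty)$ and $(\infty,0)$, since the factor carrying $f_{i}^{0}$ supplies a geometric decay $|Q^{*}|/|Q'|$ that kills the logarithmic loss from the other factor. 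The genuine problem is the pure $(\infty,\infty)$ piece. Your doubling/comparison argument only gives the \emph{multiplicative} equivalence $\mathcal{M}_{\infty\infty}(z)\approx\mathcal{M}_{\infty\infty}(x_{0})$ with a universal constant $C$, so the additive ``residual oscillation'' you must absorb is of size $(C^{2}-1)\,\mathcal{M}_{\infty\infty}(x_{0})$. But $\mathcal{M}_{\infty\infty}(x_{0})$ itself is \emph{not} dominated by any of the three right-hand terms: taking $\omega\equiv 1$, $b(y)=\log|y|$, $Q=[0,1]^{n}$, and $f_{1}=f_{2}=\chi_{2^{N}Q\setminus 2^{N-1}Q}$, one has $M_{\omega,s}(f_{i})(x)\approx M(\mathcal{M}(\vec f))(x)\approx 1$ while $\mathcal{M}_{\infty\infty}(x_{0})\approx N^{2}$, because the supremum picks $Q'\approx 2^{N}Q$ and each factor carries $|b_{Q^{*}}-b_{2^{N}Q^{*}}|\approx N$. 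Thus your last sentence (``the residual oscillation \dots\ is absorbed into the first RHS summand'') fails.

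This is exactly why the paper does \emph{not} work directly with $\mathcal{M}_{\Pi\vec b}$: it introduces the smooth auxiliary maximal operator $\Phi_{\Pi\vec b}$ built from a kernel $\varphi_{\epsilon}$ with $|\varphi'(t)|\lesssim t^{-1}$, so that for $z,z'\in Q$ and $y_{1},y_{2}\notin 2Q$ one has
\[
\bigl|\varphi_{\epsilon}(|z-y_{1}|+|z-y_{2}|)-\varphi_{\epsilon}(|z'-y_{1}|+|z'-y_{2}|)\bigr|
\;\lesssim\;\frac{|z-z'|}{(|z-y_{1}|+|z-y_{2}|)^{2n+1}}.
\]
The extra factor $|z-z'|\approx\ell(Q)$ produces a decay $2^{-k}$ on the $k$-th dyadic annulus, which is precisely what makes $\sum_{k}k\,2^{-k\epsilon_{i}}$ converge and yields the correct bound $\|b\|^{2}_{{\rm BMO}(\omega)}\omega(x)^{2}\prod_{i}M_{\omega,s}(f_{i})(x)$ for the far--far contribution. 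Your ``enlarge $Q'$ by a universal factor'' substitute produces no such decay, and the argument cannot be completed without either this smoothing device or some other mechanism that converts the BMO telescoping loss $k$ into a summable quantity.
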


\begin{proof}
First of all, we give the definition of the following auxiliary maximal function, which has been studied in \cite{ST1} and \cite{ST2} for the linear case.  Let $\varphi(x)\geq 0$ be a smooth function such that $\varphi_{\epsilon}(t)=\epsilon^{-2n}\varphi(\frac{t}{\epsilon})$, $|\varphi'(t)|\lesssim t^{-1}$ and $\chi_{[0,1]}(t)\leq \varphi(t)\leq \chi_{[0,2]}(t)$.

Let $$\Phi(f_{1},f_{2})(x)=\sup_{\epsilon>0}\int_{\mathbb{R}^n}\int_{\mathbb{R}^n}\varphi_{\epsilon}(|x-y_{1}|+|x-y_{2}|)
\prod_{i=1}^{2}|f_{i}(y_{i})|dy_{1}dy_{2},$$
and
$$\Phi_{\Pi\vec{b}}(f_{1},f_{2})(x)=\sup_{\epsilon>0}\int_{\mathbb{R}^n}\int_{\mathbb{R}^n}\varphi_{\epsilon}(|x-y_{1}|+|x-y_{2}|)
\prod_{i=1}^{2}|b(x)-b(y_{i})||f_{i}(y_{i})|dy_{1}dy_{2}.$$

We first show that
$$\Phi(f_{1},f_{2})(x)\approx \mathcal{M}(f_{1},f_{2})(x).$$
In fact, let $B_{\epsilon}=\{y\in \mathbb{R}^n:|x-y|\leq \epsilon\}$. It is easy to see that
$$B_{\frac{\epsilon}{2}}\times B_{\frac{\epsilon}{2}}\subset \big\{(y_{1},y_{2}):|x-y_{1}|+|x-y_{2}|\leq \epsilon\big\}\subset B_{\epsilon}\times B_{\epsilon}.$$
The bounded compact supported condition of $\varphi$ gives
\begin{eqnarray*}
\Phi(f_{1},f_{2})(x)&=&\sup_{\epsilon>0}\int_{\mathbb{R}^n}\int_{\mathbb{R}^n}\varphi_{\epsilon}(|x-y_{1}|+|x-y_{2}|)|f_{1}(y_{1})||f_{2}(y_{2})|dy_{1}dy_{2}\\
&\leq&\sup_{\epsilon>0}\frac{1}{\epsilon^{2n}}\int_{B_{\epsilon}}\int_{B_{\epsilon}}\varphi\Big(\frac{|x-y_{1}|+|x-y_{2}|}{\epsilon}\Big)|f_{1}(y_{1})||f_{2}(y_{2})|dy_{1}dy_{2}\\
&\lesssim& \mathcal{M}(f_{1},f_{2})(x)
\end{eqnarray*}
and
\begin{eqnarray*}
\Phi(f_{1},f_{2})(x)&\geq&\sup_{\epsilon>0}\frac{1}{\epsilon^{2n}}\int_{B_{\frac{\epsilon}{2}}}\int_{B_{\frac{\epsilon}{2}}}\varphi\Big(\frac{|x-y_{1}|+|x-y_{2}|}{\epsilon}\Big)|f_{1}(y_{1})||f_{2}(y_{2})|dy_{1}dy_{2}\\
&\gtrsim& \mathcal{M}(f_{1},f_{2})(x).
\end{eqnarray*}
We can also obtain that $\Phi_{\Pi\vec{b}}(f_{1},f_{2})(x)\approx \mathcal{M}_{\Pi\vec{b}}(f_{1},f_{2})(x).$

Now, we shall estimate the sharp maximal function of the auxiliary maximal function. Let $Q$ be a cube and $x\in Q$. Then, for any $z\in Q$ we have
\begin{eqnarray*}
\big|\Phi_{\Pi\vec{b}}(f_{1},f_{2})(z)-c_{Q}\big|&\leq& \big|b(z)-b_{Q}|^{2}\Phi(f_{1},f_{2})(z)\\
&&+\big||b(z)-b_{Q}| \Phi(f_{1},(b-b_{Q})f_{2})(z)\big|\\
&&+\big||b(z)-b_{Q}|\Phi((b-b_{Q})f_{1},f_{2})(z)\big|\\
&&+\big|\Phi((b-b_{Q})f_{1},(b-b_{Q})f_{2})(z)-c_{Q}\big|\\
&=:&A^{Q}_{1}(z)+A^{Q}_{2}(z)+A^{Q}_{3}(z)+A^{Q}_{4}(z),
\end{eqnarray*}
where $c_{Q}=\big(\Phi((b-b_{Q})f^{\infty}_{1},(b-b_{Q})f^{\infty}_{2})\big)_{Q}$ and $f^{\infty}_{i}$ will be defined later.

Therefore,
\begin{eqnarray*}
\bigg(\frac{1}{|Q|}\int_{Q}\Big|\big|\Phi_{\Pi\vec{b}}(f_{1},f_{2})(z)\big|^{\delta}-|c_{Q}|^{\delta}\Big|dz\bigg)^{1/\delta}
&\lesssim&\bigg(\frac{1}{|Q|}\int_{Q}\Big|\Phi_{\Pi\vec{b}}(f_{1},f_{2})(z)-c_{Q}|^{\delta}\Big|\bigg)^{1/\delta}\\
&\lesssim&\sum_{j=1}^{4}A_{j},
\end{eqnarray*}
where $A_{j}=\big(\frac{1}{|Q|}\int_{Q}\big(A_{j}^{Q}(z)\big)^{\delta}dz\big)^{1/\delta}, j=1,2,3,4$ and taking $\delta=1/3$.

Let us consider first the term $A_{1}$. By averaging $A^{Q}_{1}$ over $Q$, we get
\begin{eqnarray*}
A_{1}&=&\bigg(\frac{1}{|Q|}\int_{Q}\Big(\big|b(z)-b_{Q}|^{2}\Phi(f_{1},f_{2})(z)\Big)^{1/3}dz\bigg)^{3}\\
&\lesssim&\|b\|^{2}_{{\rm BMO}(\omega)}\frac{\omega(Q)^{2}}{|Q|^{2}}
\cdot\frac{1}{|Q|}\int_{Q}\mathcal{M}(f_{1},f_{2})(z)dz\\
&\lesssim&\|b\|^{2}_{{\rm BMO}(\omega)}\omega(x)^{2}M(\mathcal{M}(f_{1},f_{2}))(x).
\end{eqnarray*}

Let us consider next the term $A_{2}$. We write
\begin{eqnarray*}
A^{Q}_{2}(z)&=&\big|b(z)-b_{Q}|\Phi(f_{1},|b-b_{Q}|f_{2})(z)\\
&\leq&\big|b(z)-b_{Q}|\Phi(f_{1},\big(|b(z)-b_{Q}|+|b(z)-b|\big)f_{2})(z)\\
&\leq&\big|b(z)-b_{Q}|^{2}\mathcal{M}(f_{1},f_{2})(z)+\big|b(z)-b_{Q}|\mathcal{M}^{(2)}_{b}(f_{1},f_{2})(z)\\
&=:&A^{Q}_{21}(z)+A^{Q}_{22}(z).
\end{eqnarray*}

For $A^{Q}_{21}(z)$, the fact that $\Phi(f_{1},f_{2})(z)\lesssim \mathcal{M}(f_{1},f_{2})(z)$ gives
\begin{eqnarray*}
A_{21}&:=&\bigg(\frac{1}{|Q|}\int_{Q}\big(A^{Q}_{21}(z)\big)^{\delta}dz\bigg)^{1/\delta}\\
&\lesssim&\Big(\frac{\omega(Q)}{|Q|}\|b\|_{{\rm BMO}(\omega)}\Big)^{2}\frac{1}{|Q|}\int_{Q}\mathcal{M}(f_{1},f_{2})(z)dz\\
&\lesssim&\|b\|^{2}_{{\rm BMO}(\omega)}\omega(x)^{2}M(\mathcal{M}(f_{1},f_{2}))(x).
\end{eqnarray*}

For $A^{Q}_{22}(z)$,
\begin{eqnarray*}
A_{22}&:=&\bigg(\frac{1}{|Q|}\int_{Q}\big(A^{Q}_{22}(z)\big)^{\delta}dz\bigg)^{1/\delta}\\
&\lesssim&\omega(x)\|b\|_{{\rm BMO}(\omega)}\frac{1}{|Q|^{2}}\bigg(\int_{Q}\big|\mathcal{M}^{(2)}_{b}(f_{1},f_{2})(z)\big|^{1/2}\bigg)^{2}\\
&\lesssim&\omega(x)\|b\|_{{\rm BMO}(\omega)}M_{1/2}(\mathcal{M}^{(2)}_{b}(f_{1},f_{2}))(x).
\end{eqnarray*}

The same process also follows that
$$A_{3}\lesssim\|b\|^{2}_{{\rm BMO}(\omega)}\omega(x)^{2}M(\mathcal{M}(f_{1},f_{2}))(x)+ \omega(x)\|b\|_{{\rm BMO}(\omega)}M_{1/2}(\mathcal{M}^{(1)}_{b}(f_{1},f_{2}))(x).$$

To estimate $A_{4}$, we split $f_{j}$ to $f_{j}=f_{j}^{0}+f_{j}^{\infty}$ with $f_{j}^{0}=f_{j}\chi_{2Q}$. We write
\begin{eqnarray*}
A^{Q}_{4}&\leq& \big|\Phi((b-b_{Q})f^{0}_{1},(b-b_{Q})f^{0}_{2})(z)\big|\\
&&+\big|\Phi(((b-b_{Q})f^{0}_{1},(b-b_{Q})f^{\infty}_{2})(z)\big|\\
&&+\big|\Phi((b-b_{Q})f^{\infty}_{1},(b-b_{Q})f^{0}_{2})(z)\big|\\
&&+\big|\Phi((b-b_{Q})f^{\infty}_{1},(b-b_{Q})f^{\infty}_{2})(z)-c_{Q}\big|\\
&=:&A^{Q}_{41}(z)+A^{Q}_{42}(z)+A^{Q}_{43}(z)+A^{Q}_{44}(z).
\end{eqnarray*}
Then
\begin{eqnarray*}
A_{4}&\leq& \bigg(\frac{1}{|Q|}\int_{Q}\big(\sum_{j=1}^{4}A^{Q}_{4j}(z)\big)^{\delta}dz\bigg)^{1/\delta}\\
&\lesssim& \sum_{j=1}^{4}\bigg(\frac{1}{|Q|}\int_{Q}\big(A^{Q}_{4j}(z)\big)^{\delta}dz\bigg)^{1/\delta}\\
&\lesssim&\sum_{j=1}^{4}A_{4j}.
\end{eqnarray*}

By Kolmogorov inequality and the fact that $\mathcal{M}$ is bounded from $L^{1}\times L^{1}$ to $L^{1/2,\infty}$, we have
\begin{eqnarray*}
A_{41}&\leq& \frac{C}{|Q|^{2}}\|\Phi((b-b_{Q})f^{0}_{1},(b-b_{Q})f^{0}_{2})\|_{L^{1/2,\infty}}\\
&\leq&\frac{C}{|Q|^{2}}\|\mathcal{M}((b-b_{Q})f^{0}_{1},(b-b_{Q})f^{0}_{2})\|_{L^{1/2,\infty}}\\
&\leq&\frac{C}{|Q|^{2}}\prod_{i=1}^{2}\int_{2Q}|b(y_{i})-b_{Q}||f_{i}(y_{i})|dy_{i}\\
&\leq&\frac{C}{|Q|^{2}}\prod_{i=1}^{2}\bigg(\int_{2Q}|b(y_{i})-b_{Q}|^{s'}\omega^{1-s'}(y_{i})dy_{i}\bigg)^{1/s'}
\bigg(\int_{2Q}|f_{i}(y_{i})|^{s}\omega(y_{i})dy_{i}\bigg)^{1/s}\\
&\lesssim&\prod_{i=1}^{2}\|b\|_{{\rm BMO}^{s'}(\omega)}\omega(x)M_{\omega,s}(f_{i})(x).
\end{eqnarray*}

For $A_{42}$, it is easy to see that
$$\varphi_{\epsilon}(|z-y_{1}|+|z-y_{2}|)\lesssim \frac{1}{\big(|z-y_{1}|+|z-y_{2}|\big)^{2n}},$$
then
\begin{eqnarray*}
A_{42}&\lesssim& \frac{1}{|Q|}\int_{Q}\int_{2Q}\int_{\mathbb{R}^n\backslash 2Q}
\frac{|b(y_{1})-b_{Q}||f_{1}(y_{1})||b(y_{2})-b_{Q}||f_{2}(y_{2})|}{\big(|z-y_{1}|+|z-y_{2}|\big)^{2n}}dy_{1}dy_{2}dz\\
&\lesssim& \int_{Q}\int_{2Q}\int_{\mathbb{R}^n\backslash 2Q}
\frac{|b(y_{1})-b_{Q}||f_{1}(y_{1})||b(y_{2})-b_{Q}||f_{2}(y_{2})|}{\big(|z-y_{1}|+|z-y_{2}|\big)^{2n}}dy_{1}dy_{2}dz\\
&\lesssim& \frac{1}{|Q|}\int_{2Q}|b(y_{1})-b_{Q}||f_{1}(y_{1})|dy_{1}\int_{Q}\int_{\mathbb{R}^n\backslash 2Q}
\frac{|b(y_{2})-b_{Q}||f_{2}(y_{2})|}{|z-y_{2}|^{2n}}dy_{2}dz\\
&\lesssim&\|b\|_{{\rm BMO}^{s'}(\omega)}\omega(x)M_{\omega,s}(f_{1})(x)
\sum_{k=1}^{\infty}\frac{2^{-kn}}{|2^{k}Q|}\int_{2^{k}Q}|b(y_{2})-b_{Q}||f_{2}(y_{2})|dy_{2}\\
&\lesssim&\|b\|_{{\rm BMO}^{s'}(\omega)}\omega(x)M_{\omega,s}(f_{1})(x)
\sum_{k=1}^{\infty}\frac{2^{-kn}}{|2^{k}Q|}\\
&&\times\bigg[\int_{2^{k}Q}|b(y_{1})-m_{2^{k}Q}(b)||f_{2}(y_{2})|dy_{2}+\int_{2^{k}Q}|m_{2^{k}Q}(b)-b_{Q}||f_{2}(y_{2})|dy_{2}\bigg]\\
&\lesssim&\|b\|_{{\rm BMO}^{s'}(\omega)}\omega(x)M_{\omega,s}(f_{1})(x)
\sum_{k=1}^{\infty}\frac{2^{-kn}}{|2^{k}Q|}\\
&&\times\bigg[\|b\|_{{\rm BMO}^{s'}(\omega)}\omega(x)M_{\omega,s}(f_{2})(x)+k\|b\|_{{\rm BMO}(\omega)}\omega(x)M(f_{2})(x)\bigg]\\
&\lesssim&\prod_{i=1}^{2}\|b\|_{{\rm BMO}^{s'}(\omega)}\omega(x)M_{\omega,s}(f_{i})(x).
\end{eqnarray*}

Similarly, for $A_{43}$, we have
$$A_{43}\lesssim\prod_{i=1}^{2}\|b\|_{{\rm BMO}^{s'}(\omega)}\omega(x)M_{\omega,s}(f_{i})(x).$$

For $|z-z'|\leq \frac{1}{2}\max\{|z-y_{1}|,|z-y_{2}|\}$,
$$\big|\varphi_{\epsilon}(|z-y_{1}|+|z-y_{2}|)-\varphi_{\epsilon}(|z'-y_{1}|+|z'-y_{2}|)\big|\lesssim \frac{|z-z'|}{\big(|z-y_{1}|+|z-y_{2}|\big)^{2n+1}}.$$
Therefore,
\begin{eqnarray*}
&&\big|\Phi((b(z)-b)f^{\infty}_{1},(b(z)-b)f^{\infty}_{2})(z)
-\Phi((b(z)-b)f^{\infty}_{1},(b(z)-b)f^{\infty}_{2})(z')\big|\\
&&\lesssim \sup_{\epsilon>0}\int_{\mathbb{R}^n\backslash 2Q}\int_{\mathbb{R}^n\backslash 2Q}
\Big|\varphi_{\epsilon}(|z-y_{1}|+|z-y_{2}|)-\varphi_{\epsilon}(|z'-y_{1}|+|z'-y_{2}|)\Big|\\
&&\quad \times\prod_{i=1}^{2}|b(y_{i})-b_{Q}||f_{i}(y_{i})|dy_{1}dy_{2}\\
&&\lesssim\prod_{i=1}^{2}\int_{\mathbb{R}^n\backslash 2Q}
\frac{|z-z'|^{\epsilon_{i}}}{|z-y_{i}|^{n+\epsilon_{i}}}|b(y_{i})-b_{Q}||f_{i}(y_{i})|dy_{i}\\
&&\lesssim\prod_{i=1}^{2}\sum_{k=1}^{\infty}\frac{-2^{kn\epsilon_{i}}}{|2^{k}Q|}\int_{2^{k}Q}|b(y_{i})-b_{Q}||f_{i}(y_{i})|dy_{i}\\
&&\lesssim\prod_{i=1}^{2}\|b\|_{{\rm BMO}^{s'}(\omega)}\omega(x)M_{\omega,s}(f_{i})(x),
\end{eqnarray*}
where $\epsilon_{1},\epsilon_{2}>0$ with $\epsilon_{1}+\epsilon_{2}=1$.

Collecting our estimates, we have shown that
\begin{eqnarray*}
M^{\sharp}_{\frac{1}{3}}\big(\mathcal{M}_{\Pi\vec b}(\vec{f})\big)(x)&\lesssim& \|b\|^2_{{\rm BMO}(\omega)}\omega(x)^2 M(\mathcal{M}(\vec{f})(x))\\
&&+\|b\|^{2}_{{\rm BMO}(\omega)}\omega(x)^{2}\prod_{i=1}^{2}M_{\omega,s}(f_{i})(x)\\
&&+\sum_{i=1}^{2}\|b\|_{{\rm BMO}(\omega)}\omega(x)M_{\frac{1}{2}}(\mathcal{M}^{(i)}_{b}(\vec{f}))(x),
\end{eqnarray*}
for any $1<s<\infty$ and bounded compact supported functions $f_{1},f_{2}$.
\end{proof}

\begin{lemma}\label{lem2}
Let $\omega\in A_{1}$, $\vec{b}=(b,b)$ and $b\in {\rm BMO}(\omega)$. Then there exist a constant $C$ such that
\begin{eqnarray*}
M^{\sharp}_{\frac{1}{2}}\big(\mathcal{M}^{(1)}_{b}(\vec{f})\big)(x)&\lesssim& \|b\|_{{\rm BMO}(\omega)}\omega(x)M(\mathcal{M}(\vec{f})(x))\\
&&+\|b\|_{{\rm BMO}(\omega)}\omega(x)M_{\omega,s}(f_{1})(x)M(f_{2})(x),
\end{eqnarray*}
for any $1<s<\infty$ and bounded compact supported functions $f_{1},f_{2}$.
\end{lemma}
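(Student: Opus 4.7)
The plan is to mirror the argument of Lemma \ref{lem1}, but with only a single factor of $b$ to track. I would introduce the auxiliary maximal function
$$\Phi_{\Sigma_1 b}(f_{1},f_{2})(x)=\sup_{\epsilon>0}\int_{\mathbb{R}^n}\int_{\mathbb{R}^n}\varphi_{\epsilon}(|x-y_{1}|+|x-y_{2}|)|b(x)-b(y_{1})|\prod_{i=1}^{2}|f_{i}(y_{i})|dy_{1}dy_{2},$$
and observe that the same sandwich argument $B_{\epsilon/2}\times B_{\epsilon/2}\subset\{|x-y_1|+|x-y_2|\leq\epsilon\}\subset B_\epsilon\times B_\epsilon$ used in Lemma \ref{lem1} gives $\Phi_{\Sigma_1 b}(\vec f)\approx \mathcal{M}^{(1)}_{b}(\vec f)$. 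It therefore suffices to bound $M^{\sharp}_{1/2}(\Phi_{\Sigma_1 b}(\vec f))(x)$.

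Fix a cube $Q\ni x$. Writing $b(z)-b(y_1)=(b(z)-b_Q)-(b(y_1)-b_Q)$ yields, for $z\in Q$, the pointwise decomposition
$$\big|\Phi_{\Sigma_1 b}(f_1,f_2)(z)-c_Q\big|\leq |b(z)-b_Q|\,\Phi(f_1,f_2)(z)+\big|\Phi((b-b_Q)f_1,f_2)(z)-c_Q\big|=:B^{Q}_{1}(z)+B^{Q}_{2}(z),$$
where $c_Q:=(\Phi((b-b_Q)f_1^{\infty},f_2^{\infty}))_Q$ and $f_j=f_j^{0}+f_j^{\infty}$ with $f_j^{0}=f_j\chi_{2Q}$. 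The contribution of $B^Q_1$ is handled by taking the $L^{1/2}$-average, applying Cauchy--Schwarz together with $\Phi\lesssim\mathcal{M}$, and using the $A_1$ property $\omega(Q)/|Q|\lesssim\omega(x)$ on $\frac{1}{|Q|}\int_Q|b-b_Q|$, producing exactly the first term of the claimed bound, namely $\|b\|_{\mathrm{BMO}(\omega)}\,\omega(x)\,M(\mathcal{M}(\vec f))(x)$.

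For $B^Q_2$ I would split further into four cross-terms coming from the decomposition of the pair $(f_1,f_2)$. For the fully local piece $\Phi((b-b_Q)f_1^{0},f_2^{0})$, the weak $(1,1,1/2)$ boundedness of $\mathcal{M}$ combined with Kolmogorov's inequality gives a bound by
$$\frac{C}{|Q|^2}\int_{2Q}|b(y_1)-b_Q||f_1(y_1)|dy_1\int_{2Q}|f_2(y_2)|dy_2,$$
and then Hölder in the first integral, along with the equivalence $\mathrm{BMO}(\omega)=\mathrm{BMO}^{s'}(\omega)$ from the Remark, yields precisely $\|b\|_{\mathrm{BMO}(\omega)}\omega(x)M_{\omega,s}(f_1)(x)M(f_2)(x)$. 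The two mixed local-global terms are controlled via $\varphi_\epsilon(|z-y_1|+|z-y_2|)\lesssim(|z-y_1|+|z-y_2|)^{-2n}$ and the dyadic decomposition $\mathbb{R}^n\setminus 2Q=\bigcup_k(2^{k+1}Q\setminus 2^k Q)$, invoking $|b_{2^kQ}-b_Q|\lesssim k\|b\|_{\mathrm{BMO}(\omega)}\omega(x)$; the fully global-global term is handled by the same modulus-of-continuity estimate
$$\big|\varphi_\epsilon(|z-y_1|+|z-y_2|)-\varphi_\epsilon(|z'-y_1|+|z'-y_2|)\big|\lesssim\frac{|z-z'|}{(|z-y_1|+|z-y_2|)^{2n+1}}$$
as in Lemma \ref{lem1}, with the split $\epsilon_1+\epsilon_2=1$.

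The main obstacle is maintaining the correct asymmetry throughout the estimates: only the first slot carries the weight $|b-b_Q|$, so each off-diagonal term must produce $M_{\omega,s}(f_1)$ from the first variable but only the unweighted $M(f_2)$ from the second, with no spurious extra factor of $\|b\|_{\mathrm{BMO}(\omega)}$ or $\omega(x)$. Arranging the Hölder exponents in the dyadic annuli so that the second-variable sum telescopes cleanly into $M(f_2)(x)$ (without BMO oscillation entering), while the first variable absorbs the single oscillation, is the only place where the argument genuinely differs from the iterated case of Lemma \ref{lem1}.
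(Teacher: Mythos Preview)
Your proposal is correct and follows essentially the same approach as the paper's own proof: the same auxiliary operator $\Phi$, the same decomposition $B^{Q}_{1}+B^{Q}_{2}$ with $c_{Q}=(\Phi((b-b_{Q})f_{1}^{\infty},f_{2}^{\infty}))_{Q}$, the same four-way split of $B^{Q}_{2}$ via $f_{j}=f_{j}^{0}+f_{j}^{\infty}$, and the same tools (Kolmogorov for the local-local piece, dyadic annuli plus $|b_{2^{k}Q}-b_{Q}|\lesssim k\|b\|_{\mathrm{BMO}(\omega)}\omega(x)$ for the mixed pieces, and the smoothness estimate of $\varphi_{\epsilon}$ for the global-global piece). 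Your final remark about maintaining the asymmetry so that only $M_{\omega,s}(f_{1})$ and the unweighted $M(f_{2})$ appear is exactly the distinction the paper's computations reflect.
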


\begin{proof}
Let $Q$ be a cube and $x\in Q$. Then, for $z\in Q$ we have
\begin{eqnarray*}
\big|\Phi^{(1)}_{b}(f_{1},f_{2})(z)-c_{Q}\big|&\leq &\big|b(z)-b_{Q}|\Phi(f_{1},f_{2})(z)\\
&&+\big|\Phi((b-b_{Q})f_{1},f_{2})(z)-c_{Q}\big|\\
&=:&B^{Q}_{1}(z)+B^{Q}_{2}(z).
\end{eqnarray*}

Therefore,
\begin{eqnarray*}
&&\bigg(\frac{1}{|Q|}\int_{Q}\Big|\big|\Phi^{(1)}_{b}(f_{1},f_{2})(z)\big|^{1/2}-|c_{Q}|^{1/2}\Big|dz\bigg)^{2}\\
&&\lesssim\bigg(\frac{1}{|Q|}\int_{Q}\big|\Phi_{\Pi\vec{b}}(f_{1},f_{2})(z)-c_{Q}\big|^{1/2}dz\bigg)^{2}\\
&&\lesssim\sum_{j=1}^{2}B_{j},
\end{eqnarray*}
where $B_{j}=\big(\frac{1}{|Q|}\int_{Q}\big(B_{j}^{Q}(z)\big)^{\delta}dz\big)^{1/\delta}, j=1,2$.

Let us consider first the term $B_{1}$. By averaging $B^{Q}_{1}$ over $Q$, we get
\begin{eqnarray*}
B_{1}&=&\bigg(\frac{1}{|Q|}\int_{Q}\Big(\big|b(z)-b_{Q}|\Phi(f_{1},f_{2})(z)\Big)^{1/2}dz\bigg)^{2}\\
&\lesssim&\|b\|_{{\rm BMO}(\omega)}\frac{\omega(Q)}{|Q|}
\cdot\frac{1}{|Q|}\int_{Q}\mathcal{M}(f_{1},f_{2})(z)dz\\
&\lesssim&\|b\|_{{\rm BMO}(\omega)}\omega(x)M(\mathcal{M}(f_{1},f_{2}))(x).
\end{eqnarray*}

Let us consider next the term $B_{2}$. We split $f_{j}$ to $f_{j}=f_{j}^{0}+f_{j}^{\infty}$ with $f_{j}^{0}=f_{j}\chi_{2Q}$. We write
\begin{eqnarray*}
B^{Q}_{2}&\leq& \big|\Phi((b-b_{Q})f^{0}_{1},f^{0}_{2})(z)\big|+\big|\Phi(((b-b_{Q})f^{0}_{1},f^{\infty}_{2})(z)\big|\\
&&+\big|\Phi((b-b_{Q})f^{\infty}_{1},f^{0}_{2})(z)\big|+\big|\Phi((b-b_{Q})f^{\infty}_{1},f^{\infty}_{2})(z)-c_{Q}\big|\\
&=:&B^{Q}_{21}(z)+B^{Q}_{22}(z)+B^{Q}_{23}(z)+B^{Q}_{24}(z).
\end{eqnarray*}

By Kolmogorov inequality and the fact that $\mathcal{M}$ is bounded from $L^{1}\times L^{1}$ to $L^{1/2,\infty}$, we have
\begin{eqnarray*}
B_{21}&\leq& \frac{C}{|Q|^{2}}\|\Phi((b-b_{Q})f^{0}_{1},f^{0}_{2})\|_{L^{1/2,\infty}}\\
&\lesssim&\frac{1}{|Q|^{2}}\|\mathcal{M}((b-b_{Q})f^{0}_{1},f^{0}_{2})\|_{L^{1/2,\infty}}\\
&\lesssim&\frac{1}{|Q|^{2}}\int_{2Q}|b(y_{1})-b_{Q}||f_{1}(y_{1})|dy_{1}\int_{2Q}|f_{2}(y_{2})|dy_{2}\\
&\lesssim&\|b\|_{{\rm BMO}^{s'}(\omega)}\omega(x)M_{\omega,s}(f_{1})(x)M(f_{2})(x).
\end{eqnarray*}

For $B_{22}$,
\begin{eqnarray*}
B_{22}&\lesssim& \frac{1}{|Q|}\int_{Q}\int_{2Q}\int_{\mathbb{R}^n\backslash 2Q}
\frac{|b(y_{1})-b_{Q}||f_{1}(y_{1})||f_{2}(y_{2})|}{\big(|z-y_{1}|+|z-y_{2}|\big)^{2n}}dy_{1}dy_{2}dz\\
&\lesssim&\frac{1}{|Q|} \int_{Q}\int_{2Q}\int_{\mathbb{R}^n\backslash 2Q}
\frac{|b(y_{1})-b_{Q}||f_{1}(y_{1})||f_{2}(y_{2})|}{\big(|z-y_{1}|+|z-y_{2}|\big)^{2n}}dy_{1}dy_{2}dz\\
&\lesssim& \frac{1}{|Q|}\int_{2Q}|b(y_{1})-b_{Q}||f_{1}(y_{1})|dy_{1}\int_{Q}\int_{\mathbb{R}^n\backslash 2Q}
\frac{|f_{2}(y_{2})|}{|z-y_{2}|^{2n}}dy_{2}dz\\
&\lesssim&\|b\|_{{\rm BMO}^{s'}(\omega)}\omega(x)M_{\omega,s}(f_{1})(x)
\sum_{k=1}^{\infty}\frac{2^{-kn}}{|2^{k}Q|}\int_{2^{k}Q}|f_{2}(y_{2})|dy_{2}\\
&\lesssim&\|b\|_{{\rm BMO}^{s'}(\omega)}\omega(x)M_{\omega,s}(f_{1})(x)M(f_{2})(x).
\end{eqnarray*}

For $B_{23}$, we have
\begin{eqnarray*}
B_{23}&\lesssim& \frac{1}{|Q|}\int_{Q}\int_{\mathbb{R}^n\backslash 2Q}\int_{2Q}
\frac{|b(y_{1})-b_{Q}||f_{1}(y_{1})||f_{2}(y_{2})|}{\big(|z-y_{1}|+|z-y_{2}|\big)^{2n}}dy_{1}dy_{2}dz\\
&\lesssim&\frac{1}{|Q|} \int_{Q}\int_{2Q}\int_{\mathbb{R}^n\backslash 2Q}
\frac{|b(y_{1})-b_{Q}||f_{1}(y_{1})||f_{2}(y_{2})|}{\big(|z-y_{1}|+|z-y_{2}|\big)^{2n}}dy_{1}dy_{2}dz\\
&\lesssim& \frac{1}{|Q|}\int_{Q}\int_{\mathbb{R}^n\backslash 2Q}\frac{|b(y_{1})-b_{Q}||f_{1}(y_{1})|}{|z-y_{1}|^{2n}}dy_{1}dz\int_{2Q}
|f_{2}(y_{2})|dy_{2}\\
&\lesssim&\|b\|_{{\rm BMO}^{s'}(\omega)}\omega(x)M_{\omega,s}(f_{1})(x)M(f_{2})(x).
\end{eqnarray*}

Concerning the last estimate for $B_{24}$. For any $z'\in Q$ and $y_{1},y_{2}\in \mathbb{R}^n\backslash 2Q$, we have
\begin{eqnarray*}
&&\big|\Phi((b-b_{Q})f^{\infty}_{1},f^{\infty}_{2})(z)
-\Phi((b-b_{Q})f^{\infty}_{1},f^{\infty}_{2})(z')\big|\\
&&\lesssim \sup_{\epsilon>0}\int_{\mathbb{R}^n\backslash 2Q}\int_{\mathbb{R}^n\backslash 2Q}
\Big|\varphi_{\epsilon}(|z-y_{1}|+|z-y_{2}|)-\varphi_{\epsilon}(|z'-y_{1}|+|z'-y_{2}|)\Big|\\
&&\quad \times|b(y_{1})-b_{Q}||f_{1}(y_{1})||f_{2}(y_{2})|dy_{1}dy_{2}\\
&&\lesssim \int_{\mathbb{R}^n\backslash 2Q}\frac{|b(y_{1})-b_{Q}||f_{1}(y_{1})|}{|z-y_{1}|^{2n+\epsilon_{1}}}dy_{1}\int_{\mathbb{R}^n\backslash 2Q}
\frac{|f_{2}(y_{2})|}{|z-y_{2}|^{\epsilon_{2}}}dy_{2}\\
&&\lesssim\sum_{k=2}^{\infty}\frac{2^{-kn\epsilon_{1}}}{|2^{k}Q|}\int_{2^{k}Q}|b(y_{1})-b_{Q}||f_{1}(y_{1})|dy_{1}
\sum_{i=2}^{\infty}\frac{2^{-kn\epsilon_{2}}}{|2^{k}Q|}\int_{2^{i}Q}|f_{2}(y_{2})|dy_{2}\\
&&\lesssim\|b\|_{{\rm BMO}^{s'}(\omega)}\omega(x)M_{\omega,s}(f_{1})(x)M(f_{2})(x).
\end{eqnarray*}
where $\epsilon_{1},\epsilon_{2}>0$ with $\epsilon_{1}+\epsilon_{2}=1$. Taking the mean over $Q$ for $z$ and $z'$ respectively, we obtain
\begin{eqnarray*}
B_{24}&\lesssim&\frac{1}{|Q|}\int_{Q}\big|\Phi((b-b_{Q})f^{\infty}_{1},f^{\infty}_{2})(z)
-c_{Q}\big|dz\\
&\lesssim&\frac{1}{|Q|}\int_{Q}\frac{1}{|Q|}\int_{Q}\big|\Phi((b-b_{Q})f^{\infty}_{1},f^{\infty}_{2})(z)
-\Phi((b-b_{Q})f^{\infty}_{1},f^{\infty}_{2})(z')\big|dzdz'\\
&\lesssim& \|b\|_{{\rm BMO}^{s'}(\omega)}\omega(x)M_{\omega,s}(f_{1})(x)M(f_{2})(x).
\end{eqnarray*}

Collecting our estimates, we have shown that
\begin{eqnarray*}
M^{\sharp}_{\frac{1}{2}}\big(\mathcal{M}^{(1)}_{b}(\vec{f})\big)(x)&\lesssim& \|b\|_{{\rm BMO}(\omega)}\omega(x)M(\mathcal{M}(\vec{f})(x))\\
&&+\|b\|_{{\rm BMO}(\omega)}\omega(x)M_{\omega,s}(f_{1})(x)M(f_{2})(x),
\end{eqnarray*}
for any $1<s<\infty$ and bounded compact supported functions $f_{1},f_{2}$.
\end{proof}

Similarly, we have
\begin{lemma}\label{lem3}
Let $\omega\in A_{1}$, $\vec{b}=(b,b)$ and $b\in {\rm BMO}(\omega)$. Then there exist a constant $C$ such that
\begin{eqnarray*}
M^{\sharp}_{\frac{1}{2}}\big(\mathcal{M}^{(2)}_{b}(\vec{f})\big)(x)&\lesssim& \|b\|_{{\rm BMO}(\omega)}\omega(x)M(\mathcal{M}(\vec{f})(x))\\
&&+\|b\|_{{\rm BMO}(\omega)}\omega(x)M_{\omega,s}(f_{2})(x)M(f_{1})(x),
\end{eqnarray*}
for any $1<s<\infty$ and bounded compact supported functions $f_{1},f_{2}$.
\end{lemma}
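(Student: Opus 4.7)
The plan is to mirror the argument of Lemma \ref{lem2} almost verbatim, exchanging the roles of $f_1$ and $f_2$. First I would introduce the auxiliary maximal function
$$\Phi^{(2)}_{b}(f_{1},f_{2})(x)=\sup_{\epsilon>0}\int_{\mathbb{R}^n}\int_{\mathbb{R}^n}\varphi_{\epsilon}(|x-y_{1}|+|x-y_{2}|)|b(x)-b(y_{2})|\prod_{i=1}^{2}|f_{i}(y_{i})|dy_{1}dy_{2},$$
and verify, exactly as in the proof of Lemma \ref{lem1}, that $\Phi^{(2)}_{b}(f_{1},f_{2})(x)\approx \mathcal{M}^{(2)}_{b}(f_{1},f_{2})(x)$. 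This reduces the problem to controlling the sharp maximal function of the smooth object $\Phi^{(2)}_{b}$.

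Next I would fix a cube $Q\ni x$ and, for $z\in Q$, split
$$\big|\Phi^{(2)}_{b}(f_{1},f_{2})(z)-c_{Q}\big|\leq |b(z)-b_{Q}|\,\Phi(f_{1},f_{2})(z)+\big|\Phi(f_{1},(b-b_{Q})f_{2})(z)-c_{Q}\big|=:\widetilde B_1^Q(z)+\widetilde B_2^Q(z),$$
where $c_{Q}$ is taken to be the $Q$-average of $\Phi(f_{1}^{\infty},(b-b_{Q})f_{2}^{\infty})$ with $f_{j}=f_{j}^{0}+f_{j}^{\infty}$ and $f_{j}^{0}=f_{j}\chi_{2Q}$. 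The treatment of $\widetilde B_1^Q$ is identical to $B_1^Q$ in Lemma \ref{lem2} and already yields the first term $\|b\|_{{\rm BMO}(\omega)}\omega(x)M(\mathcal{M}(\vec{f}))(x)$ after averaging with exponent $\delta=1/2$.

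For $\widetilde B_2^Q$ I would decompose into four pieces $\widetilde B_{2j}^Q$, $j=1,2,3,4$, corresponding to the four products $f_{1}^{0}f_{2}^{0}, f_{1}^{0}f_{2}^{\infty}, f_{1}^{\infty}f_{2}^{0}, f_{1}^{\infty}f_{2}^{\infty}$. For the local piece $\widetilde B_{21}$, Kolmogorov's inequality combined with the weak-type $L^1\times L^1\to L^{1/2,\infty}$ bound for $\mathcal{M}$ and H\"older's inequality with the weight $\omega$ (using $\|b\|_{\mathrm{BMO}^{s'}(\omega)}$, which is comparable to $\|b\|_{\mathrm{BMO}(\omega)}$ by the remark after Definition 2.4) gives the term $\|b\|_{{\rm BMO}(\omega)}\omega(x)M_{\omega,s}(f_{2})(x)M(f_{1})(x)$. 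For the mixed terms $\widetilde B_{22}$ and $\widetilde B_{23}$, I would use the pointwise bound $\varphi_{\epsilon}(|z-y_{1}|+|z-y_{2}|)\lesssim (|z-y_{1}|+|z-y_{2}|)^{-2n}$ and a dyadic annular decomposition of $\mathbb{R}^n\setminus 2Q$ to produce the same bound, the only change being that the $\mathrm{BMO}(\omega)$ weight is now attached to $f_{2}$ rather than $f_{1}$.

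The main obstacle, as in Lemma \ref{lem2}, is the global term $\widetilde B_{24}$, where I cannot use the weak-type bound directly. The strategy is to exploit the smoothness of $\varphi_{\epsilon}$: for $z,z'\in Q$ and $y_{1},y_{2}\in\mathbb{R}^n\setminus 2Q$, one has
$$\big|\varphi_{\epsilon}(|z-y_{1}|+|z-y_{2}|)-\varphi_{\epsilon}(|z'-y_{1}|+|z'-y_{2}|)\big|\lesssim \frac{|z-z'|}{(|z-y_{1}|+|z-y_{2}|)^{2n+1}}.$$
Splitting the exponent as $2n+1=(n+\epsilon_1)+(n+\epsilon_2)$ with $\epsilon_1+\epsilon_2=1$ separates the two variables, and dyadic summation together with the $\mathrm{BMO}(\omega)$ estimate applied to $f_{2}$ and the standard maximal estimate applied to $f_{1}$ yields the desired control of $|\Phi(f_{1}^{\infty},(b-b_{Q})f_{2}^{\infty})(z)-\Phi(f_{1}^{\infty},(b-b_{Q})f_{2}^{\infty})(z')|$. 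Averaging over $Q$ in $z$ and $z'$ closes the argument. Collecting the four estimates and combining with $\widetilde B_1$ gives the claimed inequality.
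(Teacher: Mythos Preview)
Your proposal is correct and is exactly the approach the paper intends: the paper gives no separate proof of Lemma~\ref{lem3} but simply writes ``Similarly, we have'' after proving Lemma~\ref{lem2}, and your symmetric adaptation (attaching $(b-b_Q)$ to $f_2$ and swapping the roles of $f_1$ and $f_2$ throughout the four-term decomposition) is precisely that mirror argument.
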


\begin{lemma}\label{lem4}
Let $\omega\in A_{1}$ and $0<p<\infty$. Then $\omega^{1-p}\in A_{\infty}$.
\end{lemma}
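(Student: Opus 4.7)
The plan is to split into cases according to the sign of $1-p$ and, in each case, exhibit an explicit $r\in[1,\infty)$ with $\omega^{1-p}\in A_r$; since $A_\infty=\bigcup_{1\le r<\infty}A_r$ by definition, this suffices. The easy endpoint is $p=1$, for which $\omega^{1-p}\equiv 1\in A_1$.

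For $0<p<1$ the exponent $1-p$ lies in $(0,1)$, so $t\mapsto t^{1-p}$ is concave and Jensen's inequality applied to the averages over a cube $Q$ gives
\begin{align*}
\frac{1}{|Q|}\int_Q \omega(y)^{1-p}\,dy \le \Bigl(\frac{1}{|Q|}\int_Q \omega(y)\,dy\Bigr)^{1-p}.
\end{align*}
Taking the supremum over $Q\ni x$ and using the $A_1$ hypothesis $M\omega(x)\le C\,\omega(x)$ yields $M(\omega^{1-p})(x)\le C^{1-p}\omega(x)^{1-p}$, so $\omega^{1-p}\in A_1\subset A_\infty$.

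For $p>1$, I would verify directly that $\sigma:=\omega^{1-p}\in A_p$. The key algebraic identity is $\sigma^{-1/(p-1)}=\omega^{-(1-p)/(p-1)}=\omega$, so the $A_p$ condition for $\sigma$ becomes
\begin{align*}
\sup_Q \Bigl(\tfrac{1}{|Q|}\int_Q \omega^{1-p}\,dx\Bigr)\Bigl(\tfrac{1}{|Q|}\int_Q \omega\,dx\Bigr)^{p-1}<\infty,
\end{align*}
which is exactly the $A_p$ condition on $\omega$ itself (with the two factors interchanged). Since $\omega\in A_1\subset A_p$, this holds and hence $\omega^{1-p}\in A_p\subset A_\infty$. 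No real obstacle is anticipated; the statement is a routine consequence of the self-duality of the Muckenhoupt classes together with concavity of $t\mapsto t^{1-p}$ for $1-p\in(0,1]$, and the main point is simply to organise the case analysis so that the correct exponent $r$ is selected in each regime.
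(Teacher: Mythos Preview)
Your argument follows the same two-case split as the paper: Jensen/concavity for $0<p\le 1$ to land $\omega^{1-p}$ in $A_1$, and Muckenhoupt duality for $p>1$. One small correction in the second case: the displayed quantity
\[
\sup_Q \Bigl(\tfrac{1}{|Q|}\int_Q \omega^{1-p}\Bigr)\Bigl(\tfrac{1}{|Q|}\int_Q \omega\Bigr)^{p-1}
\]
is \emph{not} the $A_p$ condition on $\omega$ ``with the two factors interchanged''---the $A_p$ condition involves $\omega^{-1/(p-1)}$, not $\omega^{1-p}=\omega^{-(p-1)}$, and these differ unless $p=2$. What you have written is exactly $[\omega]_{A_{p'}}^{\,p-1}$, so the correct chain is $\omega\in A_1\subset A_{p'}\Rightarrow \omega^{1-p}\in A_p\subset A_\infty$. (The paper's one-line proof contains the mirror-image slip, deducing $\omega^{1-p}\in A_{p'}$ from $\omega\in A_p$; in either version the conclusion is unaffected since only membership in $A_\infty$ is needed.)
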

\begin{proof}
If $0<p\leq 1$, then $1-p\in [0,1)$. It is easy to see that $\omega^{1-p}\in A_{1}\subset A_{\infty}$.

If $1<p<\infty$, if follows from $\omega\in A_{1}\subset A_{p}$ that $\omega^{1-p}\in A_{p'}\subset A_{\infty}$.
\end{proof}

\begin{lemma}\label{lem5}
Let $\omega\in A_{1}$, $1<s<p_{1},p_{2}<\infty$ and $1/p=1/p_{1}+1/p_{2}$. Then both $\mathcal{M}(\vec{f})$ and $\prod_{i=1}^{2}M_{\omega,s}(f_{i})$ are bounded from $L^{p_{1}}(\omega)\times L^{p_{2}}(\omega)$ to $L^{p}(\omega)$.
\end{lemma}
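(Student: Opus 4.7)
The statement splits into two independent bounds, so I would handle the two operators separately and combine them at the end via H\"older.

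\textbf{Step 1: the bilinear maximal function.} Since averages factor, the trivial pointwise estimate
$$\mathcal{M}(\vec{f})(x)\leq M(f_{1})(x)\,M(f_{2})(x)$$
holds. Now $\omega\in A_{1}\subset A_{p_{1}}\cap A_{p_{2}}$, so the classical Muckenhoupt theorem gives $\|M(f_{i})\|_{L^{p_{i}}(\omega)}\lesssim\|f_{i}\|_{L^{p_{i}}(\omega)}$ for $i=1,2$. Applying H\"older's inequality with exponents $p_{1}/p,p_{2}/p$ in the measure $\omega\,dx$,
$$\|\mathcal{M}(\vec{f})\|_{L^{p}(\omega)}\leq\|M(f_{1})\|_{L^{p_{1}}(\omega)}\|M(f_{2})\|_{L^{p_{2}}(\omega)}\lesssim\|f_{1}\|_{L^{p_{1}}(\omega)}\|f_{2}\|_{L^{p_{2}}(\omega)}.$$

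\textbf{Step 2: the product $\prod_{i}M_{\omega,s}(f_{i})$.} By the same H\"older inequality as above, it suffices to prove
$$\|M_{\omega,s}(f_{i})\|_{L^{p_{i}}(\omega)}\lesssim\|f_{i}\|_{L^{p_{i}}(\omega)},\qquad i=1,2.$$
By definition, $M_{\omega,s}(f)=\bigl(M_{\omega}(|f|^{s})\bigr)^{1/s}$, so
$$\|M_{\omega,s}(f_{i})\|_{L^{p_{i}}(\omega)}=\bigl\|M_{\omega}(|f_{i}|^{s})\bigr\|_{L^{p_{i}/s}(\omega)}^{1/s}.$$
The assumption $s<p_{i}$ makes $q:=p_{i}/s>1$. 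The plan is to invoke the fact that the weighted (centred) maximal operator $M_{\omega}$ is bounded on $L^{q}(\omega)$ for every $1<q\leq\infty$. Indeed, $\omega\in A_{1}$ implies that the measure $d\mu=\omega\,dx$ is doubling, so a standard Vitali covering argument gives the weak-$(1,1)$ bound for $M_{\omega}$ with respect to $\mu$; interpolating with the trivial $L^{\infty}$ bound yields strong $L^{q}(\omega)$ boundedness for $1<q<\infty$. Apply this with $q=p_{i}/s$ to get
$$\bigl\|M_{\omega}(|f_{i}|^{s})\bigr\|_{L^{p_{i}/s}(\omega)}\lesssim\bigl\||f_{i}|^{s}\bigr\|_{L^{p_{i}/s}(\omega)}=\|f_{i}\|_{L^{p_{i}}(\omega)}^{s},$$
and taking $s$-th roots gives the desired $L^{p_{i}}(\omega)$ bound for $M_{\omega,s}$.

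\textbf{Combining.} Putting Steps 1 and 2 together, both $\mathcal{M}(\vec{f})$ and $\prod_{i=1}^{2}M_{\omega,s}(f_{i})$ map $L^{p_{1}}(\omega)\times L^{p_{2}}(\omega)$ into $L^{p}(\omega)$, with the correct product-type constant. There is no serious obstacle here; the only point that requires a little care is the $L^{q}(\omega)$-boundedness of $M_{\omega}$, which is where the hypothesis $s<p_{i}$ is used (to ensure $q>1$), and the $A_{1}$ assumption enters only through the doubling of $\omega\,dx$ and the standard $A_{p}$ theory for the unweighted $M$.
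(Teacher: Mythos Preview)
Your proof is correct and follows essentially the same route as the paper: factor through pointwise estimates, apply H\"older with exponents $p_i/p$, and use that the relevant maximal operators are bounded on $L^{p_i}(\omega)$. The only cosmetic difference is that for $\mathcal{M}$ the paper bounds $M(f)\lesssim M_{\omega,s}(f)$ (via the $A_1$ condition) and then quotes the $L^{p_i}(\omega)$-boundedness of $M_{\omega,s}$, whereas you appeal directly to Muckenhoupt's theorem for $M$; your Step~2 also supplies the justification for the $M_{\omega,s}$ bound that the paper simply asserts.
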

\begin{proof}
From the fact that $M(f)(x)\lesssim M_{\omega,s}(f)(x)$ and $M_{\omega,s}(f)(x)$ is bounded on $L^{p}(\omega)$ for $1<s<p_{1},p_{2}<\infty$, it is easy to obtain that both $\mathcal{M}(\vec{f})$ and $\prod_{i=1}^{2}M_{\omega,s}(f_{i})$ are bounded from $L^{p_{1}}(\omega)\times L^{p_{2}}(\omega)$ to $L^{p}(\omega)$.
\end{proof}

The following relationships between $M_{\delta}$ and $M^{\sharp}$ to be used is a version of the classical ones
due to Fefferman and Stein \cite{FS}.
\begin{lemma}\label{lem6}
Let $0<p,\delta<\infty$ and $\omega\in A_{\infty}$. There exist a positive $C$ such that
$$\int_{\mathbb{R}^n}(M_{\delta}f(x))^{p}\omega(x)dx\leq C\int_{\mathbb{R}^n}(M^{\sharp}_{\delta}f(x))^{p}\omega(x)dx,$$
for any smooth function $f$ for which the left-hand side is finite.
\end{lemma}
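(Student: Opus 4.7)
The plan is to deduce this inequality directly from the classical Fefferman–Stein good-$\lambda$ inequality by an exponent substitution. The classical result from \cite{FS} asserts that for every $w \in A_\infty$ and every $0 < q < \infty$,
$$\int_{\mathbb{R}^n} (Mg(x))^{q}\,\omega(x)\,dx \;\leq\; C\int_{\mathbb{R}^n} (M^{\sharp}g(x))^{q}\,\omega(x)\,dx,$$
provided the left-hand side is finite. This is the only nontrivial ingredient; the rest is a change of variables inside the integral.

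First, I would apply this classical inequality to the function $g = |f|^{\delta}$ with the exponent $q = p/\delta$, which is a legitimate positive real number since $0 < p, \delta < \infty$. Second, I would unwind the definitions: by construction,
$$M_{\delta}f(x) = \bigl(M(|f|^{\delta})(x)\bigr)^{1/\delta}, \qquad M^{\sharp}_{\delta}f(x) = \bigl(M^{\sharp}(|f|^{\delta})(x)\bigr)^{1/\delta},$$
so raising to the $p$-th power gives $(M_{\delta}f)^{p} = (M|f|^{\delta})^{p/\delta}$ and $(M^{\sharp}_{\delta}f)^{p} = (M^{\sharp}|f|^{\delta})^{p/\delta}$. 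Substituting these identities into the classical inequality yields exactly the asserted bound.

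The only point requiring care is the finiteness hypothesis needed to invoke the classical Fefferman–Stein estimate: the classical form demands that $\int (M g)^{q}\omega\,dx < \infty$. Under our hypothesis $\int (M_{\delta}f)^{p}\omega\,dx < \infty$, this quantity is precisely $\int (M|f|^{\delta})^{p/\delta}\omega\,dx$, so the finiteness condition transfers verbatim to $(g, q)$. No other obstacle arises, and no weight-theoretic step is needed beyond the $A_\infty$ assumption on $\omega$, which is preserved since the classical statement is applied with the same weight.
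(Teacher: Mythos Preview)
Your argument is correct: the substitution $g=|f|^{\delta}$, $q=p/\delta$ reduces the statement to the classical Fefferman--Stein inequality, and the finiteness hypothesis transfers exactly as you note. The paper itself does not supply a proof of this lemma; it simply records it as a known variant of the Fefferman--Stein result and cites \cite{FS}, so there is nothing further to compare.
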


\begin{lemma}\label{lem7}
Let $Q_{0}$ be any fixed cube and $b$ be a locally integral function. Then, for any $x\in Q_{0}$, we get
\begin{equation}\label{l7.1}
\mathcal{M}(\chi_{Q_{0}},\chi_{Q_{0}})(x)\equiv 1;
\end{equation}
\begin{equation}\label{l7.2}
\mathcal{M}(b\chi_{Q_{0}},\chi_{Q_{0}})(x)=\mathcal{M}(\chi_{Q_{0}},b\chi_{Q_{0}})(x)=\mathcal{M}_{Q_{0}}(b)(x);
\end{equation}
\begin{equation}\label{l7.3}
\mathcal{M}(b\chi_{Q_{0}},b\chi_{Q_{0}})(x)=\mathcal{M}^{2}_{Q_{0}}(b)(x),
\end{equation}
where $M_{Q_{0}}(b)(x)=\sup_{Q_{0}\supset Q\ni x}\frac{1}{|Q|}\int_{Q}|b(y)|dy.$
\end{lemma}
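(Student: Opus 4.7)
All three are pointwise identities, and I plan to verify each by directly computing the supremum over cubes $Q\ni x$ in the definition of $\mathcal{M}$, splitting cases according to the position of $Q$ relative to $Q_{0}$. For \eqref{l7.1}, any cube $Q\ni x$ contributes $\bigl(|Q\cap Q_{0}|/|Q|\bigr)^{2}\le 1$, with equality precisely when $Q\subseteq Q_{0}$; since $x\in Q_{0}$, arbitrarily small cubes $Q\ni x$ fit inside $Q_{0}$, so the supremum equals $1$.

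For \eqref{l7.2}, the first equality follows from the symmetric use of the single cube $Q$ in both factors of $\mathcal{M}(\cdot,\cdot)$. The direction $(\ge)$ of the second equality is obtained by restricting the supremum to cubes $Q\subseteq Q_{0}$ containing $x$; on such $Q$ the factor $\chi_{Q_{0}}\equiv 1$, so the bilinear product reduces to $\tfrac{1}{|Q|}\int_{Q}|b|$, whose supremum over such $Q$ is exactly $M_{Q_{0}}(b)(x)$.

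For the non-trivial direction $(\le)$ of \eqref{l7.2}, any cube $Q\ni x$ yields a bilinear product that factors as $\tfrac{|Q\cap Q_{0}|}{|Q|}\cdot g(Q)$, where $g(Q)=\tfrac{1}{|Q|}\int_{Q\cap Q_{0}}|b|$; since $|Q\cap Q_{0}|/|Q|\le 1$, it suffices to bound $g(Q)$ by $M_{Q_{0}}(b)(x)$. I will construct an auxiliary cube $\tilde Q\subseteq Q_{0}$ containing the box $R=Q\cap Q_{0}$, whose side length is $\ell^{*}=\max_{i}\ell_{i}$ (with $\ell_{1},\dots,\ell_{n}$ the side lengths of $R$). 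Because each $\ell_{i}\le\min(\ell(Q),\ell(Q_{0}))$, one has $\ell^{*}\le\ell(Q_{0})$, and a coordinate-wise positioning check shows that $\tilde Q$ can be placed inside $Q_{0}$ with $x\in R\subseteq\tilde Q$. Then $|\tilde Q|\le|Q|$ together with $\tilde Q\supseteq R$ yield
$$g(Q)=\frac{1}{|Q|}\int_{R}|b|\le\frac{1}{|\tilde Q|}\int_{\tilde Q}|b|\le M_{Q_{0}}(b)(x).$$

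For \eqref{l7.3}, the bilinear product equals $g(Q)^{2}$, so the claim reduces to $\sup_{Q\ni x}g(Q)=M_{Q_{0}}(b)(x)$, which is exactly the intermediate estimate just established (with the $(\ge)$ direction again coming from $Q\subseteq Q_{0}$). I expect the main obstacle to be the construction of $\tilde Q$: in dimension $n\ge 2$, the intersection $Q\cap Q_{0}$ is only a rectangular box, so a separate geometric argument is needed to exhibit a cube that simultaneously contains $R$, is contained in $Q_{0}$, and contains $x$. All remaining steps reduce to elementary side-length inequalities among $Q$, $Q_{0}$, and $R$, most importantly $\prod_{i}\ell_{i}\cdot(\ell^{*})^{n}\le\ell(Q)^{2n}$, which is immediate from $\ell_{i},\ell^{*}\le\ell(Q)$.
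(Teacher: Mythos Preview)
Your proposal is correct and follows essentially the same route as the paper: the paper also proves only \eqref{l7.3} in detail (declaring \eqref{l7.1} and \eqref{l7.2} similar), obtains the $(\ge)$ direction by restricting to cubes $Q\subset Q_{0}$, and for $(\le)$ asserts the existence of a cube $Q_{1}$ with $Q_{0}\supset Q_{1}\supset Q\cap Q_{0}\ni x$ and $|Q_{1}|\le |Q|$, which is exactly your $\tilde Q$. Your explicit coordinate-wise construction of $\tilde Q$ with side $\ell^{*}=\max_{i}\ell_{i}$ is a fine way to justify what the paper leaves implicit; the only inequality you actually need from it is $|\tilde Q|=(\ell^{*})^{n}\le \ell(Q)^{n}=|Q|$, so the product bound $\prod_{i}\ell_{i}\cdot(\ell^{*})^{n}\le \ell(Q)^{2n}$ is superfluous.
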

\begin{proof}
We only give the proof of (\ref{l7.3}) and the proof of (\ref{l7.1}),(\ref{l7.2}) are similar. For any $x\in Q_{0}$, we have
\begin{eqnarray*}
M^{2}_{Q_{0}}(b)(x)&=&\Big(\sup_{Q_{0}\supset Q\ni x}\frac{1}{|Q|}\int_{Q}|b(y)|dy\Big)^{2}\\
&=&\sup_{Q_{0}\supset Q\ni x}\frac{1}{|Q|}\int_{Q}|b(y_{1})|\chi_{Q_{0}}(y_{1})dy_{1}\cdot\frac{1}{|Q|}\int_{Q}|b(y_{2})|\chi_{Q_{0}}(y_{2})dy_{2}\\
&\leq& \mathcal{M}(b\chi_{Q_{0}},b\chi_{Q_{0}})(x).
\end{eqnarray*}

On the other hand, for any cube $Q\subset \mathbb{R}^n$, we can construct a cube $Q_{1}$ such that
$$Q_{0}\supset Q_{1}\supset Q_{0}\cap Q\ni x$$
and $|Q_{1}|\leq |Q|$. Therefore,
$$\frac{1}{|Q|}\int_{Q\cap Q_{0}}|b(y)|dy\leq\frac{1}{|Q_{1}|}\int_{Q_{1}}|b(y)|dy\leq M_{Q_{0}}(b)(x).$$
Thus,
$$\mathcal{M}(b\chi_{Q_{0}},b\chi_{Q_{0}})(x)=\sup_{Q\ni x}\Big(\frac{1}{|Q|}\int_{Q}|b(y)|\chi_{Q_{0}}(y)dy\Big)^{2}\leq  M^{2}_{Q_{0}}(b)(x),$$
then (\ref{l7.3}) is proved.
\end{proof}

\section{Proofs of Theorem \ref{thm1.1} and Theorem \ref{thm1.2}}

{\it Proof of Theorem \ref{thm1.1}.}
$(A1)\Rightarrow (A2)$: It is enough to prove Theorem \ref{thm1.1} for $f_{1},f_{2}$ being bounded functions with compact
support. We observe that to use the Fefferman-Stein inequality,
one needs to verify that certain terms in the left-hand side of the inequalities are finite. Applying a similar argument as in \cite[pp.32-33]{LOPTT}, the boundedness properties of $\mathcal{M}$ and Fatou's lemma, one gets the desired result.

Since Lemma \ref{lem4} and $\omega\in A_{1}$, then $\omega^{1-p}\in A_{\infty}$. By Lemma \ref{lem2} and Lemma \ref{lem3} with $1<s<\min\{p_{1},p_{2}\}$, from a standard argument that we can obtain
\begin{eqnarray*}
\|\mathcal{M}_{\Sigma\vec{b}}(\vec{f})\|_{L^{p}(\omega^{1-p})}&\lesssim& \|M_{\frac{1}{2}}\big(\mathcal{M}_{\Sigma\vec{b}}(\vec{f})\big)\|_{L^{p}(\omega^{1-p})}
\lesssim\|M^{\sharp}_{\frac{1}{2}}\big(\mathcal{M}_{\Sigma\vec{b}}(\vec{f})\big)\|_{L^{p}(\omega^{1-p})}\\
&\lesssim&\|b\|_{{\rm BMO}(\omega)}\bigg(\big\|M\big(\mathcal{M}(\vec{f})\big)\big\|_{L^{p}(\omega)}+\big\|\prod_{i=1}^{2}M_{\omega,s}(f_{i})\big\|_{L^{p}(\omega)}\bigg)\\
&\lesssim& \|b\|_{{\rm BMO}(\omega)}\prod_{i=1}^{2}\|f_{i}\|_{L^{p_{i}}(\omega)}.
\end{eqnarray*}

$(A2)\Rightarrow (A1)$: Let $Q$ be any fixed cube. Suppose that $\mathcal{M}_{\Sigma\vec{b}}$ is bounded from $L^{p_{1}}(\omega)\times L^{p_{2}}(\omega)$ into $L^{p}(\omega^{1-p})$, then
$$\|\mathcal{M}_{\Sigma\vec{b}}(\chi_{Q},\chi_{Q})\|_{L^{p}(\omega^{1-p})}\lesssim \|\chi_{Q}\|_{L^{p_{1}}(\omega)}\|\chi_{Q}\|_{L^{p_{2}}(\omega)}\lesssim \omega(Q)^{\frac{1}{p}},$$
which implies that
\begin{eqnarray*}
&&\frac{2}{\omega(Q)}\int_{Q}|b(x)-b_{Q}|dx\\
&&\leq\frac{1}{\omega(Q)}\int_{Q}|Q|^{-2}\int_{Q}\int_{Q}|b(x)-b(y_{1})|\chi_{Q}(y_{1})\chi_{Q}(y_{2})dy_{1}dy_{2}dx\\
&&\qquad+\frac{1}{\omega(Q)}\int_{Q}|Q|^{-2}\int_{Q}\int_{Q}|b(x)-b(y_{2})|\chi_{Q}(y_{1})\chi_{Q}(y_{2})dy_{1}dy_{2}dx\\
&&\lesssim\frac{1}{\omega(Q)}\int_{Q}\mathcal{M}_{\Sigma\vec{b}}(\chi_{Q},\chi_{Q})(x)dx\\
&&\lesssim\frac{1}{\omega(Q)}\Big(\int_{Q}\big|\mathcal{M}_{\vec b}(\chi_{Q},\chi_{Q})(x)\big|^{p}\omega(x)^{1-p}dx\Big)^{1/p}\Big(\int_{Q}\omega(x)dx\Big)^{1/p'}\\
&&\lesssim \frac{1}{\omega(Q)^{1/p}}\|\mathcal{M}_{\Sigma\vec{b}}(\chi_{Q},\chi_{Q})\|_{L^{p}(\omega^{1-p})}\\
&&\lesssim \|\mathcal{M}_{\Sigma\vec{b}}\|_{L^{p_{1}}(\omega)\times L^{p_{2}}(\omega)\rightarrow L^{p}(\omega^{1-p})}.
\end{eqnarray*}
Thus showing that $b\in {\rm BMO}(\omega)$.

$(A1)\Rightarrow (A3)$: Since $\omega\in A_{1}$,  Lemma \ref{lem4} implies that $\omega^{1-2p}\in A_{\infty}$. From Lemma \ref{lem1}, Lemma \ref{lem2} and Lemma \ref{lem3} with $1<s<\min\{p_{1},p_{2}\}$, we get
\begin{eqnarray*}
\|\mathcal{M}_{\Pi\vec{b}}(\vec{f})\|_{L^{p}(\omega^{1-2p})}&\lesssim& \|M_{\frac{1}{3}}\big(\mathcal{M}_{\Pi\vec{b}}(\vec{f})\big)\|_{L^{p}(\omega^{1-2p})}
\lesssim\|M^{\sharp}_{\frac{1}{3}}\big(\mathcal{M}_{\Pi\vec{b}}(\vec{f})\big)\|_{L^{p}(\omega^{1-2p})}\\
&\lesssim&\|b\|^{2}_{{\rm BMO}(\omega)}\bigg(\big\|M\big(\mathcal{M}(\vec{f})\big)\big\|_{L^{p}(\omega)}+\big\|\prod_{i=1}^{2}M_{\omega,s}(f_{i})\big\|_{L^{p}(\omega)}\bigg)\\
&&+\sum_{i=1}^{2}\|b\|_{{\rm BMO}(\omega)}\big\|M_{\frac{1}{2}}(\mathcal{M}^{(i)}_{b}(\vec{f}))(x)\big\|_{L^{p}(\omega^{1-p})}\\
&\lesssim& \|b\|^{2}_{{\rm BMO}(\omega)}\prod_{i=1}^{2}\|f_{i}\|_{L^{p_{i}}(\omega)}.
\end{eqnarray*}

$(A3)\Rightarrow (A1)$: By $\mathcal{M}_{\Pi\vec{b}}$ is bounded from $L^{p_{1}}(\omega)\times L^{p_{2}}(\omega)$ into $L^{p}(\omega^{1-2p})$, we get
\begin{eqnarray*}
&&\frac{1}{\omega(Q)}\int_{Q}|b(x)-b_{Q}|^{2}\omega(x)^{-1}dx\\
&&\lesssim\frac{1}{\omega(Q)}\int_{Q}\omega(x)^{-1}|Q|^{-2}\int_{Q}\int_{Q}|b(x)-b(y_{1})||b(x)-b(y_{2})|dy_{1}dy_{2}dx\\
&&\lesssim\frac{1}{\omega(Q)}\int_{Q}\mathcal{M}_{\Pi\vec{b}}(\chi_{Q},\chi_{Q})(x)\omega(x)^{-1}dx\\
&&\lesssim\frac{1}{\omega(Q)}\Big(\int_{Q}\big|\mathcal{M}_{\Pi\vec{b}}(\chi_{Q},\chi_{Q})(x)\big|^{p}
\omega(x)^{1-2p}dx\Big)^{1/p}\Big(\int_{Q}\omega(x)dx\Big)^{1/p'}\\
&&\lesssim \frac{1}{\omega(Q)^{1/p}}\|\mathcal{M}_{\Pi\vec{b}}(\chi_{Q},\chi_{Q})\|_{L^{p}(\omega^{1-2p})}\\
&&\lesssim \|\mathcal{M}_{\Pi\vec{b}}\|_{L^{p_{1}}(\omega)\times L^{p_{2}}(\omega)\rightarrow L^{p}(\omega^{1-2p})}.
\end{eqnarray*}

Thus we complete the proof of Theorem \ref{thm1.1}. \qed

\vskip 0.5cm
\noindent

{\it Proof of Theorem \ref{thm1.2}.}
$(B1)\Rightarrow B2)$: By the definition of $\mathcal{M}(\vec{f})$, we have
$$M(bf_{1},f_{2})(x)=M(|b|f_{1},f_{2})(x),\ \  M(f_{1},bf_{2})(x)=M(f_{1},|b|f_{2})(x).$$
Then
\begin{eqnarray*}
&&\big|[b,\mathcal{M}]^{(1)}(\vec{f})(x)-[|b|,\mathcal{M}]^{(1)}(\vec{f})(x)\big|\\
&&\lesssim \Big|b(x)\mathcal{M}(\vec{f})(x)-\mathcal{M}(bf_{1},f_{2})(x)-|b(x)|\mathcal{M}(\vec{f})(x)+\mathcal{M}(|b|f_{1},f_{2})(x)\Big|\\
&&\lesssim b^{-}(x)\mathcal{M}(\vec{f})(x).
\end{eqnarray*}
Similarly, we also have $\big|[b,\mathcal{M}]^{(2)}(\vec{f})(x)-[|b|,\mathcal{M}]^{(2)}(\vec{f})(x)\big|\lesssim b^{-}(x)\mathcal{M}(\vec{f})(x).$
Since $\big||a|-|c|\big|\leq |a-c|$ for any real numbers $a$ and $c$, there holds
$$\big|[|b|,\mathcal{M}]^{(i)}(f_{1},f_{2})(x)\big|\leq \mathcal{M}^{(i)}_{b}(f_{1},f_{2})(x),$$
for $i=1,2$. This shows that
\begin{eqnarray}\label{eq1.2.1}
\big|\Sigma\vec{b},\mathcal{M}](\vec{f})(x)\big|&\lesssim& \mathcal{M}_{\Sigma\vec{b}}(\vec{f})(x)+b^{-}(x)\mathcal{M}(\vec{f})(x).
\end{eqnarray}
Applying (\ref{eq1.2.1}) and Theorem \ref{thm1.1} we have
\begin{eqnarray*}
\big\|[\Sigma\vec{b},\mathcal{M}](\vec{f})(x)\big\|_{L^{p}(\omega^{1-p})}
&\lesssim& \big\|\mathcal{M}_{\Sigma\vec{b}}(\vec{f})\big\|_{L^{p}(\omega^{1-p})}+\big\|b^{-}\mathcal{M}(\vec{f})\big\|_{L^{p}(\omega^{1-p})}\\
&\lesssim& \big(\|b^{-}/\omega\|_{L^{\infty}}+\|b\|_{{\rm BMO}(\omega)}\big)\|f_{1}\|_{L^{p_{1}}(\omega)}\|f_{2}\|_{L^{p_{2}}(\omega)}.
\end{eqnarray*}
Therefore, $b\in {\rm BMO}(\omega)$ with $b^{-}/\omega\in L^{\infty}$ implies that $[\Sigma\vec{b},\mathcal{M}]$ is bounded from $L^{p_{1}}(\omega)\times L^{p_{2}}(\omega)$ to $L^{p}(\omega^{1-p})$.

\vspace{0.2cm}

$(B2)\Rightarrow (B1)$: Let $Q_{0}$ be any fixed cube. By Lemma \ref{lem7}, for any $x\in Q_{0}$,
$$b(x)=b(x)\mathcal{M}(\chi_{Q_{0}},\chi_{Q_{0}})(x),$$
$$M_{Q_{0}}(b)(x)=\mathcal{M}(b\chi_{Q_{0}},\chi_{Q_{0}})(x)=\mathcal{M}(\chi_{Q_{0}},b\chi_{Q_{0}})(x),$$
Then,
\begin{eqnarray*}
&&\frac{2}{\omega(Q_{0})}\int_{Q_{0}}|b(x)-M_{Q_{0}}(b)(x)|dx\\
&&=\frac{2}{\omega(Q_{0})}\int_{Q_{0}}|b(x)\mathcal{M}(\chi_{Q_{0}},\chi_{Q_{0}})(x)-\mathcal{M}(b\chi_{Q_{0}},\chi_{Q_{0}})(x)|dx\\
&&=\frac{1}{\omega(Q_{0})}\int_{Q_{0}}|b(x)\mathcal{M}(\chi_{Q_{0}},\chi_{Q_{0}})(x)-\mathcal{M}(b\chi_{Q_{0}},\chi_{Q_{0}})(x)\\
&&\qquad\qquad\qquad +b(x)\mathcal{M}(\chi_{Q_{0}},\chi_{Q_{0}})(x)-\mathcal{M}(\chi_{Q_{0}},b\chi_{Q_{0}})(x)|dx\\
&&\lesssim\frac{1}{\omega(Q_{0})}\int_{Q_{0}}\big|[\Sigma\vec{b},\mathcal{M}](\chi_{Q_{0}},\chi_{Q_{0}})(x)\big|dx\\
&&\lesssim \frac{1}{\omega(Q_{0})}\bigg(\int_{Q_{0}}\Big|[\Sigma\vec{b},\mathcal{M}](\chi_{Q_{0}},\chi_{Q_{0}})(x)\Big|^{p}\omega(x)^{1-p}dx\bigg)^{1/p}
\cdot\bigg(\int_{Q_{0}}\omega(x)dx\bigg)^{1/p'}\\
&&\lesssim\frac{1}{\omega(Q_{0})^{1/p}}\big\|[\Sigma\vec{b},\mathcal{M}](\chi_{Q_{0}},\chi_{Q_{0}})\big\|_{L^{p}(\omega^{1-p})}\\
&&\lesssim \|[\Sigma\vec{b},\mathcal{M}]\|_{L^{p_{1}}(\omega)\times L^{p_{2}}(\omega)\rightarrow L^{p}(\omega^{1-p})}.
\end{eqnarray*}

Now, we have all the ingredients to prove $b\in {\rm BMO}(\omega)$ and $b^{-}/\omega\in L^{\infty}$.
\begin{eqnarray*}
\frac{1}{\omega(Q_{0})}\int_{Q_{0}}|b(x)-b_{Q_{0}}|dx
&\lesssim&\frac{1}{\omega(Q_{0})}\int_{Q_{0}}|b(x)-M_{Q_{0}}(b)(x)|dx\\
&\lesssim&\|[\Sigma\vec{b},\mathcal{M}]\|_{L^{p_{1}}(\omega)\times L^{p_{2}}(\omega)\rightarrow L^{p}(\omega^{1-p})}.
\end{eqnarray*}
which implies that $b\in {\rm BMO}(\omega)$.

In order to show show that $b^{-}/\omega\in L^{\infty}$, observe that for any $x\in Q_{0}$, $M_{Q_{0}}(b)(x)\geq |b(x)|$. Therefore,
$$0\leq b^{-}(x)\lesssim M_{Q_{0}}(b)(x)-b^{+}(x)+b^{-}(x)=M_{Q_{0}}(b)(x)-b(x),$$
which gives
\begin{eqnarray*}
\frac{1}{|Q_{0}|}\int_{Q_{0}}\frac{b^{-}(x)}{\omega(x)}dx&\lesssim& \frac{1}{|Q_{0}|}\int_{Q_{0}}b^{-}(x)dx\cdot\frac{1}{\inf_{x\in Q_{0}}\omega(x)}\\
&\lesssim&\frac{1}{|Q_{0}|}\int_{Q_{0}}|b(x)-M_{Q_{0}}(b)(x)|dx\cdot \frac{|Q_{0}|}{\omega(Q_{0})}\\
&\lesssim&\|[\Sigma\vec{b},\mathcal{M}]\|_{L^{p_{1}}(\omega)\times L^{p_{2}}(\omega)\rightarrow L^{p}(\omega^{1-p})},
\end{eqnarray*}
this yields that
$$(b^{-}/\omega)_{Q_{0}}\lesssim \|[\Sigma\vec{b},\mathcal{M}]\|_{L^{p_{1}}(\omega)\times L^{p_{2}}(\omega)\rightarrow L^{p}(\omega^{1-p})}.$$
Thus, the boundedness of $b^{-}/\omega$ follows from Lebesgue's differentiation theorem.

\vspace{0.2cm}
$(B1)\Rightarrow (B3)$: Let $\vec{B}=(|b|,b)$ and $\vec{\mathbb{B}}=(|b|,|b|)$. Then
\begin{eqnarray*}
&&\Big|[\Pi\vec{b},\mathcal{M}](f_{1},f_{2})(x)-[\Pi\vec{B},\mathcal{M}](f_{1},f_{2})(x)\Big|\\
&&\lesssim \Big|b(x)b(x)\mathcal{M}(\vec{f})(x)-b(x)\mathcal{M}(f_{1},bf_{2})(x)\\
&&\qquad-|b(x)|b(x)\mathcal{M}(\vec{f})(x)+|b(x)|\mathcal{M}(f_{1},bf_{2})(x)\Big|\\
&&\lesssim b^{-}(x)\big|[b,\mathcal{M}]^{(2)}(f_{1},f_{2})(x)\big|.
\end{eqnarray*}
Similarly, we also have
\begin{eqnarray*}
&&\Big|[\Pi\vec{\mathbb{B}},\mathcal{M}](f_{1},f_{2})(x)-[\Pi\vec{B},\mathcal{M}](f_{1},f_{2})(x)\Big|\\
&&\lesssim \Big||b(x)||b(x)|\mathcal{M}(\vec{f})(x)-|b(x)|\mathcal{M}(bf_{1},f_{2})(x)\\
&&\qquad-|b(x)|b(x)\mathcal{M}(\vec{f})(x)+|b(x)|\mathcal{M}(bf_{1},f_{2})(x)\Big|\\
&&\lesssim b^{-}(x)\big|[|b|,\mathcal{M}]^{(1)}(f_{1},f_{2})(x)\big|.
\end{eqnarray*}

Noting that
$$\big|[\Pi\vec{\mathbb{B}},\mathcal{M}](\vec{f})(x)\big|\leq \mathcal{M}_{\Pi\vec{b}}(\vec{f})(x),$$
which yields that
\begin{eqnarray*}
\big|[\Pi\vec{b},\mathcal{M}](\vec{f})(x)\big|&\lesssim& \mathcal{M}_{\Pi\vec{b}}(\vec{f})(x)+b^{-}(x)\mathcal{M}_{\Sigma\vec{b}}(\vec{f})(x)+(b^{-}(x))^{2}\mathcal{M}(\vec{f})(x).
\end{eqnarray*}
It follows from Theorem \ref{thm1.1} and $b^{-}/\omega\in L^{\infty}$ that
\begin{eqnarray*}
&&\big\|[\Pi\vec{b},\mathcal{M}](\vec{f})(x)\big\|_{L^{p}(\omega^{1-2p})}\\
&&\lesssim \big\|\mathcal{M}_{\Pi\vec{b}}(\vec{f})\big\|_{L^{p}(\omega^{1-2p})}
+\big\|b^{-}\mathcal{M}_{\Sigma\vec{b}}(\vec{f})\big\|_{L^{p}(\omega^{1-2p})}+\big\|(b^{-})^{2}\mathcal{M}(\vec{f})\big\|_{L^{p}(\omega^{1-2p})}\\
&&\lesssim \|b\|^{2}_{{\rm BMO}(\omega)}\|f_{1}\|_{L^{p_{1}}(\omega)}\|f_{2}\|_{L^{p_{2}}(\omega)}+
\|b^{-}/\omega\|_{L^{\infty}}\big\|\mathcal{M}_{\Sigma\vec{b}}(\vec{f})\big\|_{L^{p}(\omega^{1-p})}\\
&&\qquad+\|b^{-}/\omega\|^{2}_{L^{\infty}}\|\mathcal{M}(\vec{f})\|_{L^{p}(\omega)}\\
&&\lesssim \big(\|b^{-}/\omega\|_{L^{\infty}}+\|b\|_{{\rm BMO}(\omega)}\big)^{2}\|f_{1}\|_{L^{p_{1}}(\omega)}\|f_{2}\|_{L^{p_{2}}(\omega)},
\end{eqnarray*}
this leads to our results.

\vspace{0.2cm}

$(B3)\Rightarrow (B1)$: Let $Q_{0}$ be any fixed cube. By Lemma \ref{lem5}, for any $x\in Q_{0}$,
$$b(x)^{2}=b(x)^{2}\mathcal{M}(\chi_{Q_{0}},\chi_{Q_{0}})(x),$$
$$b(x)M_{Q_{0}}(b)(x)=b(x)\mathcal{M}(b\chi_{Q_{0}},\chi_{Q_{0}})(x)=b(x)\mathcal{M}(\chi_{Q_{0}},b\chi_{Q_{0}})(x),$$
$$M^{2}_{Q_{0}}(b)(x)=\mathcal{M}(b\chi_{Q_{0}},b\chi_{Q_{0}})(x).$$
Then,
\begin{eqnarray*}
&&\frac{1}{\omega(Q_{0})}\int_{Q_{0}}|b(x)-M_{Q_{0}}(b)(x)|^{2}\omega(x)^{-1}dx\\
&&=\frac{1}{\omega(Q_{0})}\int_{Q_{0}}\Big(b(x)^{2}-2b(x)M_{Q_{0}}(b)(x)+M^{2}_{Q_{0}}(b)(x)\Big)\omega(x)^{-1}dx\\
&&=\frac{1}{\omega(Q_{0})}\int_{Q_{0}}[\Pi\vec{b},\mathcal{M}](\chi_{Q_{0}},\chi_{Q_{0}})(x)\omega(x)^{-1}dx\\
&&\lesssim \frac{1}{\omega(Q_{0})}\bigg(\int_{Q_{0}}\Big|[\Pi\vec{b},\mathcal{M}](\chi_{Q_{0}},\chi_{Q_{0}})(x)\Big|^{p}\omega(x)^{1-2p}dx\bigg)^{1/p}
\cdot\bigg(\int_{Q_{0}}\omega(x)dx\bigg)^{1/p'}\\
&&=\frac{1}{\omega(Q_{0})^{1/p}}\big\|[\Pi\vec{b},\mathcal{M}](\chi_{Q_{0}},\chi_{Q_{0}})\big\|_{L^{p}(\omega^{1-2p})}\\
&&\lesssim \|[\Pi\vec{b},\mathcal{M}]\|_{L^{p_{1}}(\omega)\times L^{p_{2}}(\omega)\rightarrow L^{p}(\omega^{1-2p})}.
\end{eqnarray*}

Now, we have all the ingredients to prove $b\in {\rm BMO}(\omega)$ and $b^{-}/\omega\in L^{\infty}$.
\begin{eqnarray*}
&&\frac{1}{\omega(Q_{0})}\int_{Q_{0}}|b(x)-b_{Q_{0}}|^{2}\omega(x)^{-1}dx\\
&&\lesssim\frac{1}{\omega(Q_{0})}\int_{Q_{0}}|b(x)-M_{Q_{0}}(b)(x)|^{2}\omega(x)^{-1}dx\\
&&\qquad+\frac{1}{\omega(Q_{0})}\int_{Q_{0}}|b_{Q_{0}}-M_{Q_{0}}(b)(x)|^{2}\omega(x)^{-1}dx\\
&&\lesssim \frac{1}{\omega(Q_{0})}\int_{Q_{0}}|b(x)-M_{Q_{0}}(b)(x)|^{2}\omega(x)^{-1}dx\\
&&\lesssim \|[\Pi\vec{b},\mathcal{M}]\|_{L^{p_{1}}(\omega)\times L^{p_{2}}(\omega)\rightarrow L^{p}(\omega^{1-2p})}.
\end{eqnarray*}
which implies that $b\in {\rm BMO}^{2}(\omega)$; that is, $b\in {\rm BMO}(\omega)$.

For any $x\in Q_{0}$, we have
\begin{eqnarray*}
\frac{1}{|Q_{0}|}\int_{Q_{0}}\frac{b^{-}(x)}{\omega(x)}dx&\lesssim& \frac{1}{|Q_{0}|}\int_{Q_{0}}b^{-}(x)dx\cdot\frac{1}{\inf_{x\in Q_{0}}\omega(x)}\\
&\lesssim&\frac{1}{|Q_{0}|}\int_{Q_{0}}|b(x)-M_{Q_{0}}(b)(x)|dx\cdot \frac{|Q_{0}|}{\omega(Q_{0})}\\
&\lesssim&\bigg(\frac{1}{\omega(Q_{0})}\int_{Q_{0}}|b(x)-M_{Q_{0}}(b)(x)|^{2}\omega(x)^{-1}dx\bigg)^{1/2}\\
&\lesssim&\|[\Pi\vec{b},\mathcal{M}]\|_{L^{p_{1}}(\omega)\times L^{p_{2}}(\omega)\rightarrow L^{p}(\omega^{1-2p})},
\end{eqnarray*}
which yields
$$(b^{-}/\omega)_{Q_{0}}\lesssim \|[\Pi\vec{b},\mathcal{M}]\|_{L^{p_{1}}(\omega)\times L^{p_{2}}(\omega)\rightarrow L^{p}(\omega^{1-2p})}.$$
Thus, the boundedness of $b^{-}/\omega$ follows from Lebesgue's differentiation theorem.

The proof of Theorem \ref{thm1.2} is complete. \qed

\section{\bf Acknowledgments}
The research was supported by National Natural Science Foundation of China (Grant NO.11661075).


\vskip 10mm

College of Mathematics and System Sciences, Xinjiang University, Urumqi 830046, Republic of China\\
\indent  Email:Wangdh1990@126.com; zhoujiang@xju.edu.cn.

\end{document}